\documentclass[11pt]{article}

\usepackage[margin=1in]{geometry}
\usepackage{amsmath,amssymb,amsthm,mathtools}
\usepackage{booktabs,tabularx,array,longtable,multirow}
\usepackage{enumitem}
\usepackage{placeins}
\usepackage{microtype}
\usepackage{lineno}
\usepackage{xcolor}
\usepackage[normalem]{ulem}
\usepackage{hyperref}
\hypersetup{
  colorlinks=false,
  pdfborder={0 0 1},
  citebordercolor={0.15 0.45 0.75},
  linkbordercolor={1 1 1},
  urlbordercolor={1 1 1}
}
\usepackage[capitalize,noabbrev]{cleveref}

\newtheorem{theorem}{Theorem}[section]
\newtheorem{proposition}[theorem]{Proposition}
\newtheorem{lemma}[theorem]{Lemma}
\newtheorem{corollary}[theorem]{Corollary}
\theoremstyle{definition}

\theoremstyle{remark}

\crefname{theorem}{theorem}{theorems}
\crefname{proposition}{proposition}{propositions}
\crefname{lemma}{lemma}{lemmas}
\crefname{corollary}{corollary}{corollaries}
\crefname{definition}{definition}{definitions}
\crefname{example}{example}{examples}
\crefname{remark}{remark}{remarks}

\newcommand{\R}{\mathbb R}
\newcommand{\Smat}{\mathbb S}
\newcommand{\Splus}{\mathbb S_+}
\newcommand{\one}{\mathbf 1}
\newcommand{\cP}{\mathcal P}
\newcommand{\ip}[2]{\left\langle #1,#2\right\rangle}

\newcommand{\tr}{\operatorname{tr}}

\newcommand{\Diag}{\operatorname{Diag}}
\newcommand{\arrow}{\operatorname{arrow}}
\newcommand{\range}{\operatorname{range}}
\newcommand{\nullspace}{\operatorname{null}}
\newcommand{\sym}{\operatorname{sym}}

\title{Exact Convex Reformulations of Binary Quadratic Programs\\
from Facially Reduced Semidefinite Relaxations}
\author{Hao Hu\thanks{\raggedright School of Mathematical and Statistical Sciences,
Clemson University, Clemson, SC, USA.
Email: \href{mailto:hhu2@clemson.edu}{\texttt{hhu2@clemson.edu}}.}
\and Mingming Xu\thanks{\raggedright School of Mathematical and Statistical Sciences,
Clemson University, Clemson, SC, USA.
Email: \href{mailto:mingmix@g.clemson.edu}{\texttt{mingmix@g.clemson.edu}}.}}
\date{}

\begin{document}
\maketitle

\begin{abstract}
We develop MIQCR--FR, a framework for exact convex reformulation from facially reduced
semidefinite relaxations of equality-constrained binary quadratic problems.
Standard mixed-integer quadratic convex reformulation (MIQCR) uses dual
multipliers of the original relaxation. Facial reduction (FR) reduces the
semidefinite matrix order, but
recovery of the original dual multipliers is not guaranteed in general.
For this class of problems, we identify how retaining
the linear equalities and their products with each binary variable enables
this recovery. An established dual recovery formula gives
an explicit extension of every feasible solution of the reduced dual to
the original dual, preserving its bound. For approximate multiplier
estimates, we give a spectral shift restoring dual feasibility and two
constructions optimizing different bounds within a restricted multiplier
family. Maximizing the dual bound yields a trust-region subproblem;
maximizing the continuous MIQCR bound requires only a reduced
eigenvalue calculation. The latter construction enforces convexity directly,
rather than full dual feasibility. We implement it using multiplier estimates
from the alternating direction method of multipliers (ADMM).
On 431 instances from six problem
families, the complete method certifies 296 optima, compared with 156 for
the conic-bundle method MIQCR-CB and 187 for direct Gurobi under a common
time budget.
\end{abstract}

\noindent\textbf{Keywords:}
binary quadratic programming; quadratic convex reformulation; semidefinite
programming; facial reduction; ADMM; quadratic assignment problem

\section{Introduction}

Convex reformulation offers a way to combine strong semidefinite bounds
with the integer-search capabilities of general-purpose optimization
solvers. Its computational value depends on both stages: the cost of
constructing the reformulation and the cost of solving it. This paper
establishes how the original linear equalities, together with their products
with each binary variable, connect facially reduced semidefinite optimization
with exact convex reformulation, through explicit
dual recovery and constructions for approximate multiplier estimates.
Computational experiments show that the resulting method can improve both
optimality certification and feasible-solution quality compared with solving
the original formulation directly.

We consider the equality-constrained binary quadratic problem (BQP)
\begin{equation}\label{eq:bqp}
  \min\left\{
      g(x):=x^TQx+c^Tx:
      Ax=b,\ x\in\{0,1\}^n
  \right\},
\end{equation}
where $Q\in\Smat^n$, $c\in\R^n$, $A\in\R^{m\times n}$, and
$b\in\R^m$. Redundant rows of $A$ are removed, so $A$ has full row
rank. This setting includes quadratic assignment, semi-assignment,
quadratic shortest path (QSPP), cycle cover, and cardinality-constrained
graph problems. Our computational
focus is on nonconvex objectives for which stronger relaxation bounds can
justify the cost of reformulation.

Quadratic convex reformulation (QCR), developed by Billionnet, Elloumi,
and Plateau~\cite{BillionnetEP09}, follows a two-phase procedure.
Phase~1 solves a semidefinite programming (SDP) relaxation to construct a
convex quadratic objective that agrees with $g$ at every feasible binary
point. Phase~2 solves the resulting convex binary quadratic problem with a
mixed-integer solver. The SDP is solved during preprocessing and need not
be solved at the nodes of the subsequent search tree.

Adding entrywise nonnegativity to the lifted positive-semidefinite matrix
gives a doubly nonnegative (DNN) relaxation. Its inequality multipliers,
however, generally yield an underestimating objective rather than one
that agrees with $g$ on the binary feasible set. Mixed-integer quadratic
convex reformulation (MIQCR) restores this agreement by introducing
continuous variables for selected products $x_ix_j$ and imposing linear
constraints that enforce the products when $x$ is binary
\cite{billionnet2012extending}. The resulting convex mixed-integer
quadratic program (MIQP) is exact, but its additional variables can make
Phase~2 more expensive. MIQCR-CB addresses the cost of both phases through
a conic-bundle method and a dynamic working set of
reformulation-linearization technique (RLT) constraints
\cite{billionnet2017conic}.

Equality constraints provide another way to reduce the cost of Phase~1.
The corresponding lifted equality system restricts the feasible matrices to a face of the
positive semidefinite cone. Facial reduction represents the feasible matrices in a
lower-dimensional positive semidefinite cone, reducing the matrix order
without changing the relaxation bound. For example, the alternating direction method of multipliers (ADMM)
can exploit the reduced formulation through a smaller positive-semidefinite
projection and an explicit polyhedral projection. Oliveira, Wolkowicz,
and Xu~\cite{oliveira2018admm} exploit this structure to compute bounds and
heuristic solutions for the quadratic assignment problem (QAP). Integrating
this approach into MIQCR requires converting its multiplier estimates into
an exact convex reformulation for Phase~2.

The standard MIQCR construction uses dual multipliers for the original
semidefinite relaxation, whereas facial reduction does not generally
guarantee their recovery. Permenter and Parrilo study conditions under which
such recovery is possible~\cite{permenter2018partial}. A central observation
of this paper is that the lifted equality system makes these two approaches
compatible. The same
equations that restrict the relaxation to a smaller face also supply the
multiplier freedom needed to extend every feasible point of the reduced dual to
the original formulation without changing its bound.
The dual recovery formula of Hou, Tang, and Toh~\cite{hou2026poly} provides an
explicit realization of this extension. We exploit this
connection to construct exact convex reformulations from optimization in
the reduced space.

For approximate multiplier estimates, dual feasibility can be restored by
an explicit spectral shift or by optimizing the dual bound within a
restricted multiplier family. Dual recovery then gives an exact convex
reformulation. However, full dual feasibility is stronger than the
convexity condition needed for reformulation: only the quadratic
coefficient matrix must be positive semidefinite. Applying the same completion formula directly to this
matrix yields a third construction, which optimizes the continuous MIQCR
bound within the chosen family. This direct construction is used in our
experiments. The framework therefore connects reduced-space optimization
to convex reformulation both through dual recovery and through a weaker
convexity requirement. All three constructions depend on the available
multiplier estimates rather than on the algorithm used to compute them.
We call this framework MIQCR--FR, with FR denoting facial reduction.
Our implementation uses ADMM to solve the reduced relaxation and extracts
the required estimates from its output.

Related QCR
extensions address semicontinuous variables, linear on--off constraints,
binary polynomial programs, and triangle-inequality strengthenings
\cite{WuSunLiZheng17,WuLiJiang19,ElloumiLambertLazare21,Lambert23}.
Recent low-rank augmented-Lagrangian methods also solve DNN and SDP--RLT
relaxations and recover dual information for optimality verification
\cite{hou2026rinnal,hou2026poly}. Our focus is on exact convex reformulations from multiplier estimates for
the reduced relaxation, the bounds attainable within a restricted multiplier
family, and the performance of the complete two-phase method.

Our contributions are as follows.
\begin{enumerate}[leftmargin=*]
  \item We establish that every feasible solution of the dual of the
  facially reduced relaxation can be extended to the original dual without
  changing its objective value, enabling exact convex reformulation from
  the reduced problem. This guarantee follows from retaining the original
  linear equalities and their products with each binary variable, and the
  extension is obtained through an established dual recovery formula.

  \item We develop methods for exact convex reformulation from approximate
  multiplier estimates. An explicit spectral shift restores dual feasibility
  with a computable change in the dual value, and maximizing the dual bound
  within a restricted multiplier family is equivalent to a trust-region
  subproblem. We further show that enforcing convexity directly, under a
  weaker condition than full dual feasibility, yields the strongest
  continuous MIQCR bound in the same family with fixed product variables and McCormick constraints
  through a minimum-eigenvalue calculation.

  \item We implement MIQCR--FR using ADMM and the direct continuous-bound
  construction, and compare it with MIQCR-CB and direct Gurobi on 431
  instances from six problem families. Under a common time budget, MIQCR--FR
  certifies more optima than either comparator and improves
  feasible-solution quality on several families.
\end{enumerate}

The remainder of the paper is organized as follows.
\Cref{sec:dnn-formulations} presents the original and facially reduced DNN
relaxations, their duals, and the standard MIQCR construction.
\Cref{sec:dual-recovery} gives
exact dual recovery and the three constructions for approximate
multipliers.
\Cref{sec:computational-design} presents the two-phase workflow, its ADMM implementation, and
the computational experiments.

\section{DNN relaxations, facial reduction, and MIQCR}
\label{sec:dnn-formulations}

We first state the DNN relaxation and its dual, then recall how a feasible
dual certificate supplies an exact convex MIQCR reformulation. We next
use the lifted equalities to obtain a facially reduced relaxation and its dual.

\subsection{The DNN relaxation and its dual}

To obtain a semidefinite relaxation, we lift the quadratic products into
a matrix variable. This makes the objective linear in the lifted matrix
and expresses the binary identities $x_i^2=x_i$ as linear equations.
For a binary vector $x$, define the rank-one Shor matrix
\begin{equation}\label{eq:rank-one-lift}
  Y:=
  \begin{pmatrix}1\\x\end{pmatrix}
  \begin{pmatrix}1\\x\end{pmatrix}^{\!T}
  =
  \begin{pmatrix}1&x^T\\x&xx^T\end{pmatrix}\succeq0,
\end{equation}
whose entries satisfy $Y_{ii}=Y_{0i}=x_i$ for $i=1,\ldots,n$.
We index its rows and columns by $0,1,\ldots,n$.
The relaxation replaces this rank-one matrix by a general symmetric
matrix variable $Y\in\Smat^{n+1}$. With the objective matrix
\begin{equation}\label{eq:Y-lift}
  C=\begin{pmatrix}
     0 & c^T/2\\
     c/2 & Q
  \end{pmatrix},
\end{equation}
the linear objective $\ip{C}{Y}$ agrees with $g(x)$ on every rank-one
lift~\eqref{eq:rank-one-lift}. We retain the normalization $Y_{00}=1$
and the binary identities through the arrow equations $Y_{0i}=Y_{ii}$.
Together, these constraints are written as $\arrow(Y)=e_0$, where
$e_0=(1,0,\ldots,0)^T$ and
\begin{equation}\label{eq:qcr-affine-operator}
  \arrow:\Smat^{n+1}\to\R^{n+1},
  \qquad
  \arrow(Y)
  :=
  \begin{pmatrix}Y_{00}\\ (Y_{0i}-Y_{ii})_{i=1}^n\end{pmatrix}.
\end{equation}

To incorporate $Ax=b$ and its products with each binary variable,
set $B:=[-b\ \ A]\in\R^{m\times(n+1)}$. Every rank-one
lift~\eqref{eq:rank-one-lift} of a point satisfying $Ax=b$ obeys
\begin{equation}\label{eq:BY0}
  BY=0,
\end{equation}
which we call the \emph{lifted equality system}.
With $Y_{00}=1$, the first column retains the original equalities, and the
remaining columns retain their products with each binary variable after
lifting. These lifted equalities are obtained by the reformulation-linearization
technique of Sherali and Adams~\cite{sherali1990hierarchy}.

We retain these equalities and impose entrywise nonnegativity, without
adding the full McCormick inequalities. Dropping the rank-one restriction
gives the DNN relaxation underlying the method:
\begin{equation}\label{eq:sdp-dnn}
  \min_{Y\in\Smat^{n+1}}\bigl\{\ip{C}{Y}:\arrow(Y)=e_0,\ BY=0,\ Y\ge0,\ Y\succeq0\bigr\}.
\end{equation}
Every feasible binary point admits the feasible rank-one
lift~\eqref{eq:rank-one-lift}. Hence the optimal value of~\eqref{eq:sdp-dnn}
is a lower bound on that of the original binary quadratic problem.

For the dual derivation, the multiplier $L\in\R^{m\times(n+1)}$ for
$BY=0$ contributes $\ip{L}{BY}=\ip{\sym(B^TL)}{Y}$, where
$\sym(M):=(M+M^T)/2$. The adjoint of $\arrow$ is
\begin{equation}\label{eq:qcr-affine-adjoint}
  \arrow^*(z,u)
  =\begin{pmatrix}
    z&u^T/2\\
    u/2&-\Diag(u)
  \end{pmatrix}.
\end{equation}
The normalization and arrow equations, together with $Y\succeq0$,
imply nonnegativity of the diagonal and first-row and first-column entries.
Thus only $Y_{ij}\ge0$ for $1\le i<j\le n$ adds constraints.
We nevertheless retain nonnegativity of the entire lower-right block
and set the first-row and first-column multipliers to zero. With $N\in\Smat^n$ entrywise nonnegative, the multiplier is
$\begin{psmallmatrix}0&0\\0&N\end{psmallmatrix}$ and its Lagrangian
contribution is $-\ip{N}{Y_{1:n,1:n}}$.
For $z\in\R$ and $\lambda\in\R^n$, define
\begin{equation}\label{eq:dnn-base-slack}
  S(z,\lambda,N)
  :=C-\arrow^*(z,\lambda)-\begin{pmatrix}0&0\\0&N\end{pmatrix}
  =
  \begin{pmatrix}
      -z&(c-\lambda)^T/2\\
      (c-\lambda)/2&Q+\Diag(\lambda)-N
  \end{pmatrix}.
\end{equation}
The dual of~\eqref{eq:sdp-dnn} is therefore
\begin{equation}\label{eq:original-dual}
  \begin{aligned}
  \max_{z,\lambda,N,L}\quad &z\\
  \text{s.t.}\quad
      &S(z,\lambda,N)+\sym(B^TL)\succeq0,\\
      &N\ge0 \quad\text{entrywise}.
  \end{aligned}
\end{equation}
\subsection{The MIQCR reformulation}
\label{sec:dnn-miqcr}

The standard MIQCR construction \cite{billionnet2012extending} converts
a feasible point of the original dual into an exact convex reformulation
while retaining its dual lower bound. We present it
in our multiplier notation.

Fix a feasible point $(z,\lambda,N,L)$ of~\eqref{eq:original-dual}.
Partition $L=[\,\beta\ \ \Gamma\,]$, where
$\beta\in\R^m$ and $\Gamma\in\R^{m\times n}$.  The dual slack in
\eqref{eq:original-dual} is
\begin{equation}\label{eq:original-dnn-slack-block}
  \begin{pmatrix}
   -z-\beta^Tb
   &\tfrac12(c-\lambda-\Gamma^Tb+A^T\beta)^T\\[2pt]
   \tfrac12(c-\lambda-\Gamma^Tb+A^T\beta)
   &Q+\Diag(\lambda)-N+\sym(A^T\Gamma)
  \end{pmatrix}
  \succeq0.
\end{equation}

Define
\[
  q_{\lambda,N,L}(x)
  :=z+
  \begin{pmatrix}1\\x\end{pmatrix}^{\!T}
  \bigl(S(z,\lambda,N)+\sym(B^TL)\bigr)
  \begin{pmatrix}1\\x\end{pmatrix}.
\]
The positive semidefinite slack gives $q_{\lambda,N,L}(x)\ge z$ for every
$x\in\R^n$. Its lower-right principal block is the quadratic coefficient
matrix of $q_{\lambda,N,L}$, so this function is globally convex.
Expanding the block quadratic form cancels $z$ and gives
\begin{equation}\label{eq:dnn-underestimator}
  q_{\lambda,N,L}(x)=g(x)+\lambda^T(x\circ x-x)-x^TNx+(Ax-b)^T\Gamma x+\beta^T(Ax-b).
\end{equation}
At a feasible binary point, the binary and equality perturbations vanish,
leaving $q_{\lambda,N,L}(x)=g(x)-x^TNx\le g(x)$.
MIQCR restores the removed term $x^TNx$ through product variables.

For the binary problem~\eqref{eq:bqp}, let
$E\subseteq\{(i,j):1\le i<j\le n\}$ contain the off-diagonal support of $N$.
Introduce continuous variables $y_{ij}$ for $(i,j)\in E$ and impose the
McCormick system
\begin{equation}\label{eq:product-linking}
  0\le y_{ij},\qquad
  y_{ij}\le x_i,\qquad
  y_{ij}\le x_j,\qquad
  y_{ij}\ge x_i+x_j-1.
\end{equation}
For binary $x$, these constraints give $y_{ij}=x_ix_j$. Let $Y_E$ denote
the symmetric partial product matrix with $(Y_E)_{ii}=x_i$,
$(Y_E)_{ij}=(Y_E)_{ji}=y_{ij}$ for $(i,j)\in E$, and zero entries in the
remaining off-diagonal positions. The reformulated objective is
\begin{equation}\label{eq:miqcr-compact}
  q_{\lambda,N,L}(x)+\ip{N}{Y_E}.
\end{equation}
This objective is jointly convex because $q_{\lambda,N,L}$ is convex and
$Y_E$ enters linearly. At every feasible binary point, the product constraints
and the choice of $E$ give $\ip{N}{Y_E}=x^TNx$. The added linear term therefore
cancels precisely the term removed from $g(x)$, so the reformulated objective
equals $g(x)$. Thus every original-dual feasible point produces
a globally convex MIQP that is exact on the binary feasible set.
Since $q_{\lambda,N,L}(x)\ge z$ and $\ip{N}{Y_E}\ge0$, the continuous
reformulation retains the valid dual lower bound $z$.
\Cref{subsec:dual-feasible-repair} explains how to obtain such a tuple from
approximate multiplier estimates.

\subsection{The facially reduced relaxation and dual}
\label{sec:admm-implementation}
\label{sec:fr-dnn}

Facial reduction, introduced by Borwein and Wolkowicz
\cite{borwein1981facial}, addresses the failure of Slater's condition in
conic optimization by replacing the ambient cone with a smaller face
containing the feasible set. In semidefinite programming, this allows
feasible matrices to be represented in a lower-dimensional space; reaching
the minimal face restores strict feasibility relative to that face.
When $m>0$, the lifted equalities force the feasible matrices into a proper face
of the positive semidefinite cone. Exploiting this structure yields a smaller semidefinite representation
without changing the relaxation bound. The reduced formulation and its dual provide the setting for the recovery
and reformulation results in \cref{sec:dual-recovery}.

If~\eqref{eq:sdp-dnn} is feasible, its feasible set is compact: the diagonal
equations and positive semidefiniteness bound every entry of $Y$.  Its
dual~\eqref{eq:original-dual} is strictly feasible by setting
$N=\one\one^T$, $L=0$,
choosing $\lambda=\rho\one$ with $\rho$ sufficiently large, and then taking
$z<0$ with sufficiently large magnitude.  Hence there is no duality gap.
When $m>0$, however, $BY=0$ forces every feasible $Y$ to be
singular.  The original primal therefore fails Slater's condition, and
attainment of the original dual is not automatic.  Facial reduction makes
this degeneracy explicit.

The equations~\eqref{eq:BY0} place every feasible lifted matrix in a
face of $\Splus^{n+1}$, which is proper when $m>0$.  The matrix $B^TB\succeq0$ exposes this face:
for every feasible $Y\succeq0$,
\begin{equation}\label{eq:exposing-W}
  \ip{B^TB}{Y}=\tr(BYB^T)=0.
\end{equation}
Consequently $\range(Y)\subseteq\nullspace(B)$.  Let
$V\in\R^{(n+1)\times r}$ have orthonormal columns spanning
$\nullspace(B)$, where
\[
  r=n+1-m.
\]
Then every such matrix has the representation
\begin{equation}\label{eq:facial-param}
  Y=VRV^T,
  \qquad R\in\Splus^r.
\end{equation}
Using nonnegativity of the lower-right block of $Y$, the facially reduced
relaxation is
\begin{equation}\label{eq:reduced-dnn}
  \begin{aligned}
  \min_R\quad &\ip{V^TCV}{R}\\
  \text{s.t.}\quad
      &\arrow(VRV^T)=e_0,\\
      &(VRV^T)_{1:n,1:n}\ge0,\\
      &R\succeq0.
  \end{aligned}
\end{equation}
The parametrization~\eqref{eq:facial-param} is exact, so
\eqref{eq:reduced-dnn} and \eqref{eq:sdp-dnn} have the same optimal value.

The dual of~\eqref{eq:reduced-dnn} is
\begin{equation}\label{eq:reduced-dual}
  \begin{aligned}
  \max_{z,\lambda,N}\quad &z\\
  \text{s.t.}\quad
      &V^TS(z,\lambda,N)V\succeq0,\\
      &N\ge0 \quad\text{entrywise}.
  \end{aligned}
\end{equation}

The reduction removes the degeneracy caused by $BY=0$, but does not
necessarily restore strict feasibility of every remaining constraint.
Our recovery uses the full nullspace of $B$ and requires only reduced-dual
feasibility. If the reduced primal satisfies the appropriate Slater
condition, its finite optimal value is attained by the reduced dual.
The recovery below then transfers that optimal value to the original dual.
No additional facial reduction is needed for the finite dual repair in
\cref{subsec:dual-feasible-repair}.

\section{Dual recovery and convex reformulation}
\label{sec:dual-recovery}
Our goal is to construct exact convex MIQCR reformulations from multipliers
of the facially reduced relaxation. We first establish the dual-recovery
route to such a reformulation, then develop constructions for approximate
estimates, including one that enforces convexity directly.
Dual recovery is not guaranteed after facial reduction. For the SDP
relaxation considered here, the lifted equality system $BY=0$ provides
the multiplier corrections needed to recover an original-dual feasible
point from any feasible point of the reduced dual, without changing its objective
value. This system retains both the original equalities and their products
with the binary variables.

Approximate multiplier estimates need not form a feasible point of the reduced dual.
We therefore consider three ways to use such estimates:
restore dual feasibility by a spectral shift, optimize the dual bound
within a restricted multiplier family, or construct a convex reformulation
that optimizes the continuous MIQCR bound directly. The first two recover
dual certificates; the third uses the approximate multiplier estimates
directly to construct the Phase~2 objective. These are alternative
constructions, not successive steps.

The following lemma states the matrix-completion identity underlying the
proof of Hou, Tang, and Toh \cite[Theorem~4]{hou2026poly} independently of
the optimization model. For a given matrix $D$, a symmetric matrix $M$
can be made positive semidefinite by adding a term of the form
$\sym(D^T\Lambda)$ if and only if $M$ is positive semidefinite on
$\nullspace(D)$.

{
\begin{lemma}[Positive semidefinite completion]
\label[lemma]{lem:psd-completion}
Let $M\in\Smat^p$, let $D\in\R^{r\times p}$ have full row
rank, and let the columns of $U$ form an orthonormal basis of
$\nullspace(D)$. Then
\begin{equation}\label{eq:psd-completion-equivalence}
 U^TMU\succeq0
 \quad\Longleftrightarrow\quad
 \exists\,\Lambda\in\R^{r\times p}:\ M+\sym(D^T\Lambda)\succeq0.
\end{equation}
In particular, with $P=UU^T$, the choice
\begin{equation}\label{eq:psd-completion-multiplier}
 \Lambda=-(DD^T)^{-1}DM(I+P)
\end{equation}
satisfies
\begin{equation}\label{eq:psd-completion-identity}
 M+\sym(D^T\Lambda)=PMP.
\end{equation}
\end{lemma}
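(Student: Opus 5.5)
We will first verify the explicit identity \eqref{eq:psd-completion-identity}, since it immediately gives the reverse-free direction ``$\Rightarrow$'' of \eqref{eq:psd-completion-equivalence}. The relevant structure is the orthogonal decomposition $\R^p=\nullspace(D)\oplus\range(D^T)$. Because $D$ has full row rank, $DD^T$ is invertible, and the orthogonal projector onto $\range(D^T)$ is $I-P=D^T(DD^T)^{-1}D$.

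With $\Lambda$ as in \eqref{eq:psd-completion-multiplier}, we have
\[
 D^T\Lambda=-D^T(DD^T)^{-1}DM(I+P)=-(I-P)M(I+P).
\]
Symmetrizing, and using that $P$ and $M$ are symmetric, gives
\[
 \sym(D^T\Lambda)=-\tfrac12\bigl[(I-P)M(I+P)+(I+P)M(I-P)\bigr].
\]
Expanding the bracket, the cross terms $\pm PM$ and $\pm MP$ cancel, and the bracket equals $2M-2PMP$. Hence $\sym(D^T\Lambda)=-(M-PMP)$, and therefore $M+\sym(D^T\Lambda)=PMP$. This is the one computation to carry out carefully; the factor $(I+P)$ is chosen precisely so that the cross terms cancel.

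For ``$\Rightarrow$'': if $U^TMU\succeq0$, then $PMP=U(U^TMU)U^T\succeq0$. So the $\Lambda$ above works.

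For ``$\Leftarrow$'': suppose $M+\sym(D^T\Lambda)\succeq0$ for some $\Lambda$. Since $DU=0$,
\[
 U^T\sym(D^T\Lambda)U=\tfrac12\bigl[(DU)^T\Lambda U+U^T\Lambda^T(DU)\bigr]=0.
\]
Compressing by $U$ therefore gives $U^TMU=U^T\bigl(M+\sym(D^T\Lambda)\bigr)U\succeq0$, because a congruence preserves positive semidefiniteness.

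There is no real obstacle here. The only points needing care are the sign and transpose bookkeeping in the cancellation above, and noting that full row rank is used only to define $(DD^T)^{-1}$ and the projector formula.
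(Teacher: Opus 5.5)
Your proof is correct and follows the paper's argument essentially step for step. You use the projector identity $I-P=D^T(DD^T)^{-1}D$ to get $\sym(D^T\Lambda)=PMP-M$, then $PMP=U(U^TMU)U^T$ for the forward direction and $DU=0$ with congruence by $U$ for the converse.
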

\begin{proof}
Since $P$ is the orthogonal projector onto $\nullspace(D)$, its
complementary projector is $I-P=D^T(DD^T)^{-1}D$. The choice of
$\Lambda$ in~\eqref{eq:psd-completion-multiplier} therefore satisfies
\[
 \sym(D^T\Lambda)
 =-\sym\bigl((I-P)M(I+P)\bigr)=PMP-M.
\]
This proves~\eqref{eq:psd-completion-identity}. If $U^TMU\succeq0$, it follows that
\[
 M+\sym(D^T\Lambda)=PMP=U(U^TMU)U^T\succeq0,
\]
proving the forward implication in~\eqref{eq:psd-completion-equivalence}.

Conversely, if $M+\sym(D^T\Lambda)\succeq0$ for some $\Lambda$,
then $DU=0$ gives
\[
 U^TMU
 =U^T\bigl(M+\sym(D^T\Lambda)\bigr)U\succeq0.
\]
\end{proof}
}

In the applications below, $D$ represents the equality constraints and
$\Lambda$ their multipliers. We use the identity both to recover the
lifted-equality multiplier and to construct the Phase~2 quadratic objective.

\subsection{Dual recovery from a feasible point of the reduced dual}
\label{subsec:exact-dual-recovery}
Given a feasible point $(z,\lambda,N)$ of the reduced
dual~\eqref{eq:reduced-dual}, the task is to recover an equality multiplier $L$
that makes $(z,\lambda,N,L)$ feasible for the original
dual~\eqref{eq:original-dual}, without changing the dual value. This recovered
solution supplies the MIQCR reformulation in \cref{sec:dnn-miqcr}.

The condition $V^TSV\succeq0$ ensures that $S$ is positive semidefinite
on $\ker B$, but does not ensure that $S$ is positive semidefinite on the
whole space. The multipliers of the lifted equality system $BY=0$ generate every
symmetric correction $T$ satisfying $V^TTV=0$.
Indeed, for $F:=\{Y\succeq0:BY=0\}$, with orthogonal complement taken
in $\Smat^{n+1}$,
\[
 F^\perp=\{T:V^TTV=0\}
 =\{\sym(B^TL):L\in\R^{m\times(n+1)}\}.
\]
These identities explain the geometric basis of \cref{lem:psd-completion}
in the present setting. The terms $\sym(B^TL)$ span $F^\perp$, and the
decomposition $F^*=\Splus^{n+1}+F^\perp$, an instance of the niceness
property studied by Pataki \cite{pataki2012nice}, guarantees that a slack
positive semidefinite on $\ker B$ can be made positive semidefinite by
such a correction. \Cref{lem:psd-completion} provides an explicit multiplier
realizing this correction.

With $B$ and $V$ as defined above, set
\[
  \Pi:=VV^T=I-B^T(BB^T)^{-1}B.
\]
\begin{corollary}[Recovery of the equality multiplier]
\label[corollary]{prop:dual-recovery}
For $S\in\Smat^{n+1}$,
\[
 V^TSV\succeq0
 \quad\Longleftrightarrow\quad
 \exists\,L\in\R^{m\times(n+1)}:\ S+\sym(B^TL)\succeq0.
\]
\end{corollary}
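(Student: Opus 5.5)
This is a direct specialization of \cref{lem:psd-completion}. I will take $p=n+1$, $D=B\in\R^{m\times(n+1)}$ (so the lemma's $r$ becomes $m$), $M=S$, and $U=V$. The hypotheses of the lemma hold: $B=[-b\ \ A]$ has full row rank because $A$ does (redundant rows were removed), and the columns of $V$ are by construction an orthonormal basis of $\nullspace(B)$. The equivalence~\eqref{eq:psd-completion-equivalence} then reads exactly as the corollary, with $\Lambda=L\in\R^{m\times(n+1)}$.

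For an explicit certificate, I will record the multiplier supplied by~\eqref{eq:psd-completion-multiplier}:
\[
L=-(BB^T)^{-1}BS(I+\Pi),
\]
where $P=VV^T=\Pi$, and $\Pi=I-B^T(BB^T)^{-1}B$ is the complementary-projector identity used in the lemma's proof. By~\eqref{eq:psd-completion-identity}, this choice gives $S+\sym(B^TL)=\Pi S\Pi=V(V^TSV)V^T\succeq0$.

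Conversely, if some $L$ makes $S+\sym(B^TL)\succeq0$, I will compress by $V$ and use $BV=0$. This gives $V^T\sym(B^TL)V=0$, hence $V^TSV=V^T\bigl(S+\sym(B^TL)\bigr)V\succeq0$.

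The only point needing care is the full-row-rank check, which makes $BB^T$ invertible. Since $B$ contains $A$ as a column block, $\operatorname{rank}B\ge\operatorname{rank}A=m$, so $B$ has full row rank and the check is immediate. No other obstacle is expected.
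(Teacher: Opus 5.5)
Your proposal is correct and follows the paper's proof: it specializes \cref{lem:psd-completion} with $M=S$, $D=B$, $U=V$, $\Lambda=L$, records the explicit multiplier $L=-(BB^T)^{-1}BS(I+\Pi)$, and uses $BV=0$ for the converse. Your check that $B=[-b\ \ A]$ inherits full row rank from $A$ is a hypothesis of the lemma that the paper leaves implicit, and you verify it correctly.
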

\begin{proof}
Apply \cref{lem:psd-completion} with $M=S$, $D=B$, $U=V$, and
$\Lambda=L$. Its projector is $P=\Pi$. In particular, if $V^TSV\succeq0$,
the choice
\begin{equation}\label{eq:direct-recovery-L}
  L:=-(BB^T)^{-1}BS(I+\Pi)
\end{equation}
satisfies
\begin{equation}\label{eq:direct-recovered-slack}
  S+\sym(B^TL)=\Pi S\Pi=V(V^TSV)V^T\succeq0.
\end{equation}
The converse is the necessity direction of the lemma.
\end{proof}
Apply the corollary to $S=S(z,\lambda,N)$ for a feasible point of
\eqref{eq:reduced-dual}. Recovery changes only $L$, so the original-dual
point has the same $z$, $\lambda$, and $N$. In particular, the dual value
is preserved. When the reduced primal satisfies Slater's condition, an
optimal solution of the reduced dual therefore gives an optimal solution
of the original dual whose objective value equals the optimal value
of~\eqref{eq:sdp-dnn}.

\subsection{Dual recovery from approximate multipliers: spectral shift}
\label{subsec:admm-postprocessing}
\label{subsec:dual-feasible-repair}

The following proposition starts from arbitrary estimates of the normalization,
arrow-equation, and nonnegativity multipliers. It enforces nonnegativity of
$N$ and restores reduced-dual feasibility by a spectral shift.
\begin{proposition}[Finite dual repair]\label[proposition]{prop:dnn-dual-repair}
Let $\widetilde z\in\R$, $\widetilde\lambda\in\R^n$, and
$\widetilde N\in\Smat^n$ be arbitrary multiplier estimates. Set
\[
 N:=\max\{\widetilde N,0\}\quad\text{entrywise},
 \qquad
 \widetilde S:=S(\widetilde z,\widetilde\lambda,N).
\]
Define
\begin{equation}\label{eq:dnn-repair-direction}
 T_0:=
 \begin{pmatrix}
   n&-\one^T/2\\
   -\one/2&I
 \end{pmatrix},
 \qquad
 V^TT_0V=R_T^TR_T,
\end{equation}
where $R_T$ is the Cholesky factor. Let
\begin{equation}\label{eq:dnn-repair-shift}
 \delta_*
 :=\max\left\{0,-\lambda_{\min}\!\left(
 R_T^{-T}V^T\widetilde S V R_T^{-1}
 \right)\right\}.
\end{equation}
For any $\delta\ge\delta_*$, set
\begin{equation}\label{eq:dnn-repaired-multipliers}
 z_\delta:=\widetilde z-n\delta,
 \qquad
 \lambda_\delta:=\widetilde\lambda+\delta\one,
 \qquad
 S_\delta:=S(z_\delta,\lambda_\delta,N).
\end{equation}
Then $(z_\delta,\lambda_\delta,N)$ is feasible for the reduced
dual~\eqref{eq:reduced-dual}. Applying \cref{prop:dual-recovery} yields
an original-dual feasible point with the same value $z_\delta$.
\end{proposition}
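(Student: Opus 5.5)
Our plan is to show that the shift changes the slack by exactly a multiple of $T_0$, and then to use the Cholesky factor of $V^TT_0V$ to turn reduced-dual feasibility into a generalized eigenvalue condition.

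The first step is the affine dependence of $S$ on its arguments. From \eqref{eq:dnn-base-slack}, $S(z,\lambda,N)$ is affine in $(z,\lambda)$. Replacing $z$ by $\widetilde z-n\delta$ changes the $(0,0)$ entry by $+n\delta$. Replacing $\lambda$ by $\widetilde\lambda+\delta\one$ changes the off-diagonal block by $-\delta\one/2$ and the lower-right block by $+\delta I$. Hence
\[
 S_\delta=\widetilde S+\delta T_0,
 \qquad
 V^TS_\delta V=V^T\widetilde SV+\delta\,V^TT_0V .
\]
Nonnegativity of $N$ holds by construction. So the only thing left for reduced-dual feasibility is $V^TS_\delta V\succeq0$.

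The key step, and the one we expect to be the main obstacle, is to show that $V^TT_0V$ is positive definite, so that the Cholesky factor $R_T$ exists and is invertible. We would write
\[
 \begin{pmatrix}t\\w\end{pmatrix}^{\!T}T_0\begin{pmatrix}t\\w\end{pmatrix}
 =nt^2-t\,\one^Tw+\|w\|^2
 =\sum_{i=1}^n\Bigl(w_i-\tfrac t2\Bigr)^2+\tfrac{3n}{4}t^2 .
\]
Each term is nonnegative, and the sum vanishes only when $t=0$ and $w=0$. Thus $T_0\succ0$. Since $V$ has orthonormal, hence linearly independent, columns, $V^TT_0V\succ0$ as well. (If $n=0$ the statement is vacuous, so we assume $n\ge1$.)

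With $R_T$ invertible, we congruence-transform by $R_T^{-1}$:
\[
 R_T^{-T}V^TS_\delta VR_T^{-1}
 =R_T^{-T}V^T\widetilde SVR_T^{-1}+\delta I .
\]
The right side is positive semidefinite exactly when $\delta\ge-\lambda_{\min}\bigl(R_T^{-T}V^T\widetilde SVR_T^{-1}\bigr)$. This holds for every $\delta\ge\delta_*$ by \eqref{eq:dnn-repair-shift}. Because congruence preserves semidefiniteness, we get $V^TS_\delta V\succeq0$. Therefore $(z_\delta,\lambda_\delta,N)$ is feasible for \eqref{eq:reduced-dual}.

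Finally, we apply \cref{prop:dual-recovery} with $S=S_\delta$. The choice \eqref{eq:direct-recovery-L} gives $L$ with $S_\delta+\sym(B^TL)\succeq0$. Together with $N\ge0$, the tuple $(z_\delta,\lambda_\delta,N,L)$ is feasible for \eqref{eq:original-dual}. Its objective value is $z_\delta$, since recovery alters only $L$.
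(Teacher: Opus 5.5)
Your proposal is correct and follows essentially the same route as the paper: write $S_\delta=\widetilde S+\delta T_0$, show $T_0\succ0$ so $V^TT_0V\succ0$ and $R_T$ is invertible, use congruence by $R_T^{-1}$ to get $V^TS_\delta V\succeq0$, then invoke \cref{prop:dual-recovery}. The only cosmetic difference is that you prove $T_0\succ0$ by completing the square, where the paper uses the Schur complement $3n/4$ of the identity block; the two arguments are equivalent.
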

\begin{proof}
Entrywise clipping preserves symmetry and gives $N\ge0$.
The Schur complement of the identity block in $T_0$ is $3n/4>0$.
Thus $T_0\succ0$ and, since $V$ has full column rank, $V^TT_0V\succ0$,
so $R_T$ is invertible. The multiplier changes give
\[
 S_\delta-\widetilde S
 =-\arrow^*(-n\delta,\delta\one)=\delta T_0.
\]
Consequently,
\[
 R_T^{-T}(V^TS_\delta V)R_T^{-1}
 =R_T^{-T}(V^T\widetilde S V)R_T^{-1}+\delta I\succeq0,
\]
where the inequality follows from $\delta\ge\delta_*$.
Congruence therefore gives $V^TS_\delta V\succeq0$.
Together with $N\ge0$, this proves reduced-dual feasibility.
The original-dual extension follows
from \cref{prop:dual-recovery}.
\end{proof}
The repaired value $z_\delta=\widetilde z-n\delta$ is a valid dual lower
bound, and the MIQCR construction in \cref{sec:dnn-miqcr} gives an exact
convex reformulation whose continuous relaxation has optimal value at least
$z_\delta$.
The correction guarantees feasibility, but its cost $n\delta$ need not
be small; the initial estimate $\widetilde z$ need not itself be valid.
\subsection{Recovery from approximate multipliers: optimizing the dual bound}
\label{sec:optimal-facial-repair}
The spectral-shift repair changes the normalization and arrow-equation
multipliers through a single parameter $\delta$. We now optimize the
normalization and a scalar adjustment to the arrow multipliers jointly.
We show that this dual-bound optimization
is equivalent to a quadratic minimization on a sphere, allowing efficient
computation of the best dual certificate within the chosen multiplier family.

Let $\widetilde\lambda\in\R^n$ and $\widetilde N\in\Smat^n$ be given
multiplier estimates. As in the spectral-shift construction, set
$N=\max\{\widetilde N,0\}$ entrywise and define the centered arrow multiplier
$\bar\lambda=\widetilde\lambda-(\one^T\widetilde\lambda/n)\one$.
We keep $N$ and $\bar\lambda$ fixed and restrict the arrow multiplier to the family
\begin{equation}\label{eq:lambda-decomposition}
 \lambda=\bar\lambda+\mu\one,\qquad \mu\in\R,
\end{equation}
and optimize only $(z,\mu)$, subject to reduced-dual feasibility.

The term $\mu\one$ in~\eqref{eq:lambda-decomposition} contributes $\mu\sum_i(x_i^2-x_i)$
to the quadratic form. Every binary point satisfies $\sum_i(x_i^2-x_i)=0$,
which is a sphere equation by the identity
\begin{equation}\label{eq:ordinary-binary-sphere}
 \sum_i(x_i^2-x_i)
 =\left\|x-\tfrac12\one\right\|^2-\frac n4.
\end{equation}
This identity places every binary point on a sphere, but does not yet
account for the equalities $Ax=b$. To incorporate them, we parameterize
the feasible affine space and express its intersection with the sphere
in reduced coordinates.
\label{subsec:binary-sphere}
\label{subsec:facial-loading-threshold}
Assume the binary feasible set is nonempty. Let $d=n-m$ and let
$W\in\R^{n\times d}$ have orthonormal columns spanning $\nullspace(A)$.
Project the sphere center $\tfrac12\one$ onto the affine space $Ax=b$
and define the squared radius of the intersection:
\begin{equation}\label{eq:binary-sphere-center}
 x_c=\frac12\one+A^T(AA^T)^{-1}
             \left(b-\frac12A\one\right),
 \qquad
 \rho=\frac n4-\left\|x_c-\frac12\one\right\|^2.
\end{equation}
Every point satisfying $Ax=b$ has the unique representation
\begin{equation}\label{eq:affine-null-param}
 x=x_c+Wu,\qquad
 \widetilde V:=\begin{pmatrix}1&0\\x_c&W\end{pmatrix},\qquad
 \begin{pmatrix}1\\x\end{pmatrix}
 =\widetilde V\begin{pmatrix}1\\u\end{pmatrix}.
\end{equation}
Substituting $x=x_c+Wu$ into the sphere identity
\eqref{eq:ordinary-binary-sphere}, using $W^TW=I$ and the orthogonality of
$x_c-\tfrac12\one$ to $\nullspace(A)$, gives
\begin{equation}\label{eq:binary-sphere-identity}
 \sum_i(x_i^2-x_i)=\|u\|^2-\rho.
\end{equation}
If $d=0$ or $\rho=0$, the nonempty binary feasible set contains only
$x_c$ and can be handled directly. We therefore assume $d\ge1$ and
$\rho>0$ in the following construction.

Since the columns of $V$ and $\widetilde V$ span the same nullspace,
$V^TSV\succeq0$ is equivalent to $\widetilde V^TS\widetilde V\succeq0$.
Set
\begin{equation}\label{eq:K-bar-lambda-N}
 K:=\widetilde V^TS(0,\bar\lambda,N)\widetilde V
   =\begin{pmatrix}a&g^T\\g&H\end{pmatrix}.
\end{equation}
In the original coordinates, changing the multipliers from
$(0,\bar\lambda)$ to $(z,\bar\lambda+\mu\one)$ changes the slack by
\[
 \Delta S:=S(z,\bar\lambda+\mu\one,N)-S(0,\bar\lambda,N)
 =\begin{pmatrix}
 -z&-\mu\one^T/2\\
 -\mu\one/2&\mu I_n
 \end{pmatrix}.
\]
Transforming this change to the reduced coordinates gives
\[
 \begin{aligned}
 \widetilde V^T\Delta S\widetilde V
 &=\begin{pmatrix}
 -z+\mu(\|x_c\|^2-\one^Tx_c)&\mu(x_c-\tfrac12\one)^TW\\
 \mu W^T(x_c-\tfrac12\one)&\mu W^TW
 \end{pmatrix}\\
 &=\begin{pmatrix}-z-\rho\mu&0\\0&\mu I_d\end{pmatrix},
 \end{aligned}
\]
where $W^TW=I_d$, $W^T(x_c-\tfrac12\one)=0$, and
$\|x_c\|^2-\one^Tx_c=-\rho$ by~\eqref{eq:binary-sphere-center}.
Thus the multiplier change is block diagonal in the reduced coordinates.
Adding it to $K$ yields the reduced slack
\begin{equation}\label{eq:common-repair-compression}
 \widetilde V^TS(z,\bar\lambda+\mu\one,N)\widetilde V
 =\begin{pmatrix}a-z-\rho\mu&g^T\\g&H+\mu I\end{pmatrix}.
\end{equation}
Dual-bound optimization maximizes the dual objective $z$
over $(z,\mu)$, subject to positive semidefiniteness of the reduced slack
in~\eqref{eq:common-repair-compression}.
The following standard equivalence expresses this optimization as a
quadratic minimization on the sphere.

\begin{theorem}[Trust-region characterization of the optimized repair]
\label{thm:optimal-facial-tr-repair}
For $d\ge1$, $\rho>0$, and
$K=\begin{psmallmatrix}a&g^T\\g&H\end{psmallmatrix}\in\Smat^{d+1}$,
the two problems
\begin{align}
 &\max_{z,\mu}\left\{z:
 \begin{pmatrix}a-z-\rho\mu&g^T\\g&H+\mu I\end{pmatrix}\succeq0
 \right\},\label{eq:repair-two-variable-sdp}\\
 &\min_{\|u\|^2=\rho}\{a+2g^Tu+u^THu\}
 \label{eq:repair-trust-region}
\end{align}
have equal, attained optimal values.
\end{theorem}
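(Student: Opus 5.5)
We will prove the equality of the two optimal values by establishing weak duality directly, then strong duality via the standard trust-region optimality characterization (Moré–Sorensen / S-lemma), and finally attainment on both sides.

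\emph{Weak duality.} Let $(z,\mu)$ be feasible for \eqref{eq:repair-two-variable-sdp} and let $\|u\|^2=\rho$. Apply the quadratic form of the positive semidefinite matrix to the vector $(1,u)$. This gives
\[
0\le a-z-\rho\mu+2g^Tu+u^THu+\mu\|u\|^2 .
\]
Since $\mu\|u\|^2=\rho\mu$, the $\mu$ terms cancel, so $z\le a+2g^Tu+u^THu$. Hence the value of \eqref{eq:repair-two-variable-sdp} is at most the value of \eqref{eq:repair-trust-region}. The sphere is compact and nonempty because $d\ge1$ and $\rho>0$, so \eqref{eq:repair-trust-region} attains its minimum at some $u_*$ with value $f_*$.

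\emph{Strong duality and dual attainment.} Invoke the classical necessary and sufficient optimality conditions for the equality-constrained trust-region problem (Moré–Sorensen; equivalently the S-lemma with an equality constraint). At a global minimizer $u_*$ of \eqref{eq:repair-trust-region} there exists $\mu_*\in\R$ such that
\[
(H+\mu_* I)u_*=-g \quad\text{and}\quad H+\mu_*I\succeq0 .
\]
Because the constraint is an equality, no sign condition on $\mu_*$ is needed. Set
\[
z_*:=a-\rho\mu_*+g^Tu_* .
\]
We then verify feasibility of $(z_*,\mu_*)$ by the generalized Schur complement:
\begin{itemize}
\item $H+\mu_*I\succeq0$ holds by the optimality conditions.
\item $g=-(H+\mu_*I)u_*$ lies in $\range(H+\mu_*I)$.
\item The Schur complement is $a-z_*-\rho\mu_*-g^T(H+\mu_*I)^{\dagger}g$. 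Using $g=-(H+\mu_*I)u_*$, the last term equals $u_*^T(H+\mu_*I)u_*=-g^Tu_*$, so the Schur complement is $a-z_*-\rho\mu_*+g^Tu_*=0$.
\end{itemize}
Hence $(z_*,\mu_*)$ is feasible for \eqref{eq:repair-two-variable-sdp}.

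Next we compute its objective value. The stationarity condition gives $u_*^THu_*=-g^Tu_*-\mu_*\rho$. Therefore
\[
f_*=a+2g^Tu_*+u_*^THu_*=a+g^Tu_*-\rho\mu_*=z_* .
\]
Combined with weak duality, the two values are equal and both are attained.

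\emph{Main obstacle.} The only nontrivial ingredient is the existence of $\mu_*$ with $H+\mu_*I\succeq0$ at a global minimizer, in particular in the hard case where $g\perp$ the eigenspace of $\lambda_{\min}(H)$. If a self-contained argument is needed, use the following route.
\begin{itemize}
\item Homogenize the problem and apply the S-lemma with equality, which is valid because $\|u\|^2-\rho$ takes both signs given $\rho>0$. This yields $\mu_*$ such that $a-f_*+2g^Tu+u^THu+\mu_*(\|u\|^2-\rho)\ge0$ for all $u$.
\item Homogenizing with $t^2$ turns this inequality into exactly the feasibility of $(f_*,\mu_*)$ in \eqref{eq:repair-two-variable-sdp}, so it directly supplies a dual-feasible point with value $f_*$.
\end{itemize}
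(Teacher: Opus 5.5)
Your argument is correct, but it takes a different route from the paper's proof.

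The paper works entirely through semidefinite duality. It writes the conic dual $\min\{\langle K,R\rangle: R\succeq0,\ R_{00}=1,\ \tr(R_{1:d,1:d})=\rho\}$ and gets equal, attained values for this pair from Slater points on both sides. It then uses Pataki's rank bound to show that an optimal extreme point has the form $[1;u][1;u]^T$ with $\|u\|^2=\rho$, so this relaxation of the sphere problem is exact.

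You bypass the intermediate semidefinite program. Weak duality comes from evaluating the slack at $(1,u)$. You close the gap by building the optimal pair $(z_*,\mu_*)$, with $z_*=a-\rho\mu_*+g^Tu_*$, directly from a global minimizer $u_*$ and its multiplier. Your generalized Schur-complement check and value computation are both right.

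What each route buys:
\begin{itemize}
\item Your route needs no conic duality or rank theorem. It also gives an explicit recipe for turning the output of a trust-region solver into the optimal repair $(z,\mu)$, which the paper asserts only implicitly.
\item The paper's route treats all cases uniformly, gets attainment from Slater's condition, and keeps the result inside the semidefinite framework used throughout the paper.
\end{itemize}

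The step you flag as the main obstacle is easier than you suggest for the equality-constrained problem; no hard-case analysis or S-lemma is needed. Let $q(u):=a+2g^Tu+u^THu$.
\begin{itemize}
\item Since $\|u_*\|^2=\rho>0$, the constraint gradient is nonzero, so a multiplier $\mu_*$ with $(H+\mu_*I)u_*=-g$ exists.
\item The function $\phi(u):=a+2g^Tu+u^T(H+\mu_*I)u$ satisfies $\phi(u)-\phi(u_*)=(u-u_*)^T(H+\mu_*I)(u-u_*)$. On the sphere this equals $q(u)-q(u_*)\ge0$.
\item Every $w$ with $w^Tu_*\neq0$ is a nonzero multiple of $u-u_*$ for the sphere point $u=u_*-2(w^Tu_*/\|w\|^2)w$. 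Such $w$ are dense, so $H+\mu_*I\succeq0$.
\end{itemize}
With this, your proof is fully self-contained.
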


\begin{proof}
The semidefinite dual of \eqref{eq:repair-two-variable-sdp} is
\begin{equation}\label{eq:repair-two-constraint-sdp}
 \min_{R\succeq0}\{\ip{K}{R}:R_{00}=1,\
             \tr(R_{1:d,1:d})=\rho\}.
\end{equation}
Its feasible set is compact because $\tr R=1+\rho$, and
$\Diag(1,\rho/d,\ldots,\rho/d)$ is strictly feasible.
The maximization problem is also strictly feasible: choose
$\mu>-\lambda_{\min}(H)$ and then $z$ sufficiently negative.
Hence both optima are attained and their values agree.
An optimal extreme point of \eqref{eq:repair-two-constraint-sdp} has rank
one. Indeed, Pataki's rank bound \cite[Theorem~2.1]{pataki1998rank}
gives $r(r+1)/2\le2$ for a rank-$r$ extreme point under two affine constraints.
Since $R_{00}=1$, the rank is exactly one. Write
$R=[1;u][1;u]^T$; the trace constraint becomes $\|u\|^2=\rho$,
and its objective is the quadratic in \eqref{eq:repair-trust-region}.
\end{proof}

Problem~\eqref{eq:repair-trust-region} is an equality-constrained
trust-region subproblem that can be solved efficiently
\cite[Section~2]{hager2001sphere}. The corresponding optimal pair
$(z,\mu)$ in~\eqref{eq:repair-two-variable-sdp} gives a feasible
point $(z,\bar\lambda+\mu\one,N)$ of the reduced dual.
Applying \cref{prop:dual-recovery} then recovers the equality
multiplier $L$, yielding an original-dual feasible point with
the same objective value $z$.

The spectral-shift repair in~\eqref{eq:dnn-repaired-multipliers} is feasible
for~\eqref{eq:repair-two-variable-sdp}, so dual-bound optimization cannot
decrease its dual value in exact arithmetic. This guarantee concerns the certificate; it does not imply a
stronger continuous MIQCR bound or faster integer optimization. The optimization
still fixes $\bar\lambda$ and $N$, so it need not attain the DNN optimum.

\subsection{Reformulation from approximate multipliers: optimizing the continuous MIQCR bound}
\label{subsec:root-oriented-reformulation}
Dual recovery provides a sufficient condition for an exact convex MIQCR
reformulation by making the full homogeneous dual slack positive
semidefinite. Convexity, however, requires only a positive semidefinite
quadratic coefficient matrix. We now apply the same completion formula
directly to this matrix and choose the shift to maximize the continuous
MIQCR bound. This construction need not produce a feasible point of
\eqref{eq:reduced-dual}; its purpose is an exact convex reformulation,
rather than a dual certificate.

We use the same centered multiplier $\bar\lambda$ and
entrywise nonnegative matrix $N$ defined in \cref{sec:optimal-facial-repair},
keep both fixed, and optimize only $\mu$
in~\eqref{eq:lambda-decomposition}. To see why, recall
\eqref{eq:dnn-underestimator}: the normalization multiplier $z$
cancels when the quadratic objective is expanded, while the equality
multipliers contribute
\[
 (Ax-b)^T\Gamma x+\beta^T(Ax-b).
\]
These terms vanish at every point satisfying $Ax=b$, including
fractional points in the continuous relaxation. The equality
multipliers can therefore change the curvature outside this affine
space without changing the objective on the feasible set.
With $\bar\lambda$, $N$, and the product variables and McCormick constraints fixed,
only $\mu$ changes the continuous relaxation objective, through
the term $\mu\sum_i(x_i^2-x_i)$.

Define
\begin{equation}\label{eq:root-centered-block}
 K_0:=Q+\Diag(\bar\lambda)-N.
\end{equation}
Recall that $d=n-m$. If $d=0$, the equalities determine a unique candidate $x$;
checking whether it is binary solves the original BQP. We therefore assume
$d>0$. Using the nullspace basis $W$ from the preceding subsection, set
\begin{equation}\label{eq:root-reduced-hessian}
 H_0:=W^TK_0W.
\end{equation}
Nohra, Raghunathan, and Sahinidis \cite[Sections~3.3--3.4]{nohra2021spectral}
study nullspace eigenvalue shifts and show that the smallest admissible
nonnegative shift gives the strongest relaxation in their scalar-shift
family. Here we apply this principle to $K_0$, with the MIQCR multipliers
and product constraints fixed. Binary-point exactness allows $\mu\in\R$,
and the explicit equality completion below makes the resulting quadratic
objective globally convex.

For a given $\mu$, consider the quadratic function
\[
 g(x)+(\bar\lambda+\mu\one)^T(x\circ x-x)-x^TNx,
\]
whose quadratic coefficient matrix is $K_0+\mu I$.
This matrix need not be positive semidefinite.
Adding the equality term $(Ax-b)^T\Gamma x$ changes the quadratic
coefficient matrix to $K_0+\mu I+\sym(A^T\Gamma)$.
We therefore seek the smallest shift $\mu$ for which a choice of
$\Gamma$ makes the resulting quadratic function convex.

\begin{proposition}[Smallest convexifying parameter]
\label[proposition]{prop:root-minimum-curvature}
The smallest $\mu\in\R$ for which there exists
$\Gamma\in\R^{m\times n}$ satisfying
\[
 K_0+\mu I+\sym(A^T\Gamma)\succeq0
\]
is
\begin{equation}\label{eq:root-mu-star}
 \mu_{\star}=-\lambda_{\min}(H_0).
\end{equation}
\end{proposition}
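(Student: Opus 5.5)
The plan is to reduce everything to \cref{lem:psd-completion}, applied with $M=K_0+\mu I\in\Smat^n$, $D=A$ (full row rank by assumption), and $U=W$, whose columns form an orthonormal basis of $\nullspace(A)$. The lemma says that some $\Gamma\in\R^{m\times n}$ with $K_0+\mu I+\sym(A^T\Gamma)\succeq0$ exists if and only if $W^T(K_0+\mu I)W\succeq0$. Since $W^TW=I_d$, this compressed matrix is $H_0+\mu I_d$. So the set of admissible $\mu$ is exactly $\{\mu: H_0+\mu I_d\succeq0\}=\{\mu:\mu\ge-\lambda_{\min}(H_0)\}$. This set is a closed half-line, and it is nonempty because $d>0$ makes $H_0$ a genuine $d\times d$ symmetric matrix with a well-defined minimum eigenvalue. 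Hence the smallest admissible $\mu$ exists and equals $\mu_\star=-\lambda_{\min}(H_0)$.

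The steps, in order, are as follows.

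\textbf{Step 1.} Identify the lemma's data: $p=n$, $r=m$, $D=A$, $\Lambda=\Gamma$. Confirm that $\sym(A^T\Gamma)$ has the form $\sym(D^T\Lambda)$ and that the projector is $P=WW^T=I-A^T(AA^T)^{-1}A$.

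\textbf{Step 2.} Invoke the equivalence~\eqref{eq:psd-completion-equivalence} and compute $W^T(K_0+\mu I)W=H_0+\mu I_d$.

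\textbf{Step 3.} Use the spectral characterization $H_0+\mu I\succeq0\iff\mu\ge-\lambda_{\min}(H_0)$ to conclude that $\mu_\star$ is admissible and that any smaller $\mu$ is not.

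\textbf{Step 4 (optional).} For completeness, record the explicit witness at $\mu=\mu_\star$ given by~\eqref{eq:psd-completion-multiplier}:
\[
\Gamma=-(AA^T)^{-1}A(K_0+\mu_\star I)(I+WW^T),
\]
which yields the convex quadratic coefficient matrix $WW^T(K_0+\mu_\star I)WW^T=W(H_0+\mu_\star I)W^T\succeq0$.

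There is no substantive obstacle. The only point needing care is the minimality claim, which requires the necessity direction of the lemma: if $\mu<\mu_\star$, then for every $\Gamma$ we have
\[
W^T\bigl(K_0+\mu I+\sym(A^T\Gamma)\bigr)W=H_0+\mu I,
\]
because $AW=0$ kills the correction term. This matrix has a negative eigenvalue, so no $\Gamma$ can work. The remaining checks are that $A$ has full row rank, which holds because redundant rows were removed, and that $d\ge1$, which is assumed. Together these make $(AA^T)^{-1}$ and $\lambda_{\min}(H_0)$ well defined.
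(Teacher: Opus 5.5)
Your proof is correct and is essentially the paper's argument: apply \cref{lem:psd-completion} with $M=K_0+\mu I$, $D=A$, $U=W$, $\Lambda=\Gamma$, reduce admissibility to $H_0+\mu I\succeq0$, and read off the threshold $\mu\ge-\lambda_{\min}(H_0)$. Your optional witness in Step~4 is the $\Gamma_\star$ of~\eqref{eq:root-Gamma}, which the paper gives immediately after the proposition.
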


\begin{proof}
By \cref{lem:psd-completion} with $M=K_0+\mu I$, $D=A$,
$U=W$, and $\Lambda=\Gamma$,
\[
 \exists\,\Gamma\in\R^{m\times n}:\ K_0+\mu I+\sym(A^T\Gamma)\succeq0
 \quad\Longleftrightarrow\quad
 H_0+\mu I\succeq0,
\]
since $W^T(K_0+\mu I)W=H_0+\mu I$. The latter condition holds exactly
when $\mu\ge-\lambda_{\min}(H_0)$.
\end{proof}

To construct the reformulation, set $K_{\star}:=K_0+\mu_{\star}I$
and let
\[
 P:=WW^T=I-A^T(AA^T)^{-1}A.
\]
The multiplier formula in \cref{lem:psd-completion}, with
$M=K_{\star}$, $D=A$, $U=W$, and $\Lambda=\Gamma_{\star}$, gives
\begin{equation}\label{eq:root-Gamma}
 \Gamma_{\star}
 :=-(AA^T)^{-1}AK_{\star}(I+P),
\end{equation}
for which
\begin{equation}\label{eq:root-completion}
 K_{\star}+\sym(A^T\Gamma_{\star})
 =PK_{\star}P
 =W(H_0+\mu_{\star}I)W^T\succeq0.
\end{equation}

With the shift $\mu_{\star}$ and equality perturbation
$(Ax-b)^T\Gamma_{\star}x$, define the quadratic function
\begin{equation}\label{eq:root-underestimator}
 \begin{aligned}
 q_{\star}(x)
 ={}&g(x)
 +(\bar\lambda+\mu_{\star}\one)^T(x\circ x-x)
 -x^TNx\\
 &+(Ax-b)^T\Gamma_{\star}x.
 \end{aligned}
\end{equation}
By \eqref{eq:root-completion}, $q_{\star}$ is globally convex.
At every feasible binary point, the binary and equality perturbations
vanish, giving $q_{\star}(x)=g(x)-x^TNx$. Restoring $x^TNx$
through the product variables and McCormick constraints in
\cref{sec:dnn-miqcr} therefore yields an exact convex MIQP.
The multiplier $N$, product variables, and McCormick constraints remain fixed
throughout this construction.

We now show that $\mu_{\star}$ maximizes the continuous MIQCR bound
within the restricted multiplier family.
For fixed $(\bar\lambda,N)$, changing $\mu$ from $\mu_1$ to
$\mu_2$ changes the Phase~2 objective on $Ax=b$ by
\begin{equation}\label{eq:common-shift-phase2-difference}
 (\mu_2-\mu_1)\sum_{i=1}^n(x_i^2-x_i).
\end{equation}
For $0\le x\le1$,
\[
 \sum_i(x_i^2-x_i)=-\sum_i x_i(1-x_i)\le0.
\]
Thus increasing $\mu$ weakens the pointwise objective of the
continuous minimization problem.

\begin{corollary}[Optimality of the minimum convexifying shift]
\label[corollary]{thm:root-dominance}
Fix $(\bar\lambda,N)$ and use the same product variables and McCormick constraints. Let
$r(\mu)$ denote the optimal value of the continuous MIQCR relaxation formed
with parameter $\mu$ in~\eqref{eq:lambda-decomposition} and any equality completion that makes its quadratic
coefficient positive semidefinite.  Then every such $\mu$ satisfies
$\mu\ge\mu_{\star}$ and
\begin{equation}\label{eq:root-monotonicity}
 \mu_{\star}\le\mu_1\le\mu_2
 \quad\Longrightarrow\quad
 r(\mu_{\star})\ge r(\mu_1)\ge r(\mu_2).
\end{equation}
When the three constructions are applied to the same multiplier
estimates and use the same product variables and McCormick constraints, the choice
$\mu=\mu_{\star}$ gives a continuous MIQCR bound no weaker than
those obtained by spectral-shift recovery or dual-bound optimization.
\end{corollary}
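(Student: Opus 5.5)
The argument has three steps: show that $\mu\ge\mu_\star$ is necessary, prove the monotonicity chain by comparing feasible objectives pointwise, and place the spectral-shift and dual-bound constructions inside the same family.

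\emph{Admissible parameters.} Suppose the relaxation with parameter $\mu$ is formed with some equality completion $\Gamma$ for which $K_0+\mu I+\sym(A^T\Gamma)\succeq0$. Then \cref{prop:root-minimum-curvature} immediately gives $\mu\ge\mu_\star$.

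\emph{Monotonicity.} Fix the product variables $y_{ij}$, $(i,j)\in E$, together with the McCormick system~\eqref{eq:product-linking} and the bounds $0\le x\le1$. For parameter $\mu$ and admissible completion $\Gamma$, the continuous relaxation minimizes
\[
 g(x)+(\bar\lambda+\mu\one)^T(x\circ x-x)-x^TNx+(Ax-b)^T\Gamma x+\ip{N}{Y_E}
\]
over $(x,y)$ with $Ax=b$ and the McCormick constraints. I would first note that every feasible point satisfies $Ax=b$, so the completion term vanishes identically there. The objective on the feasible set therefore depends only on $\mu$, and $r(\mu)$ is well defined independently of which admissible $\Gamma$ is used. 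This resolves the phrase "any equality completion" in the statement, and it is the one point that needs care.

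Next, for $\mu_1\le\mu_2$, the two objectives differ by~\eqref{eq:common-shift-phase2-difference}, namely $(\mu_2-\mu_1)\sum_i(x_i^2-x_i)$. On $0\le x\le1$ we have $\sum_i(x_i^2-x_i)=-\sum_i x_i(1-x_i)\le0$, so the $\mu_2$-objective is pointwise no larger than the $\mu_1$-objective on the common feasible set. Taking minima gives $r(\mu_1)\ge r(\mu_2)$. Applying this with $\mu_\star\le\mu_1$, which is admissible by \cref{prop:root-minimum-curvature} via $\Gamma_\star$, yields~\eqref{eq:root-monotonicity}. The feasible set is compact and the objectives are convex, so both minima are attained, although only the pointwise inequality is needed.

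\emph{Comparison with the other two constructions.} Spectral-shift recovery with the centered estimate gives $\lambda_\delta=\bar\lambda+\mu\one$, with $N$ unchanged. (If $\widetilde\lambda$ is not centered, I would reinterpret it this way, so that it belongs to the same family.) Dual-bound optimization uses $\lambda=\bar\lambda+\mu\one$ by construction. In either case the recovered tuple is original-dual feasible, so the lower-right block of~\eqref{eq:original-dnn-slack-block}, which equals $K_0+\mu I+\sym(A^T\Gamma)$, is positive semidefinite. Hence the parameter is admissible with the recovered $\Gamma$. The contributions of $z$ and $\beta$ vanish on $Ax=b$, as noted after~\eqref{eq:dnn-underestimator}, so the resulting continuous relaxation value is $r(\mu)$ with $\mu\ge\mu_\star$. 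Monotonicity then gives $r(\mu_\star)\ge r(\mu)$, which completes the comparison.
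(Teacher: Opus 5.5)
Your proposal is correct and follows the paper's proof: admissibility from \cref{prop:root-minimum-curvature}, the pointwise comparison \eqref{eq:common-shift-phase2-difference} on $0\le x\le1$ followed by taking minima, and admissibility of both recovery shifts because the lower-right block of the recovered slack is $K_0+\mu I+\sym(A^T\Gamma)\succeq0$. Your extra remarks make explicit what the paper's terse proof leaves implicit: the completion term vanishes on $Ax=b$, so $r(\mu)$ does not depend on the chosen $\Gamma$, and the spectral-shift multiplier lies in the family automatically since $\widetilde\lambda+\delta\one=\bar\lambda+(\one^T\widetilde\lambda/n+\delta)\one$.
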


\begin{proof}
By \cref{prop:root-minimum-curvature}, every admissible shift satisfies
$\mu\ge\mu_{\star}$. Equation~\eqref{eq:common-shift-phase2-difference}
shows that increasing $\mu$ decreases the objective pointwise on the
common continuous feasible set, since $0\le x\le1$. Taking minima
proves \eqref{eq:root-monotonicity}. Both recovery procedures yield
positive-semidefinite quadratic coefficients, so their shifts are
admissible and the comparison follows.
\end{proof}

The three constructions offer different ways to turn approximate
multiplier estimates into an exact convex MIQCR reformulation.
Spectral-shift recovery restores dual feasibility explicitly, while
dual-bound optimization chooses the best dual certificate within the
restricted multiplier family. Continuous-bound optimization instead
chooses the smallest convexifying shift, which gives the strongest
continuous MIQCR bound within that family, using fixed product variables and McCormick
constraints. The latter two objectives need not select the same shift.

\Cref{tab:multiplier-constructions} summarizes the information used and the
multipliers constructed in each approach.

\begin{table}[ht]
\centering\small
\caption{Inputs and constructed multipliers for the three reformulation methods.}
\label{tab:multiplier-constructions}
\begin{tabularx}{\textwidth}{@{}>{\raggedright\arraybackslash}p{0.23\textwidth}>{\raggedright\arraybackslash}p{0.21\textwidth}>{\raggedright\arraybackslash}p{0.25\textwidth}>{\raggedright\arraybackslash}X@{}}
\toprule
Construction & Starting estimates used & Parameters chosen & Equality multipliers derived\\
\midrule
\ref{subsec:dual-feasible-repair} Spectral shift &
$\widetilde z,\widetilde\lambda,\widetilde N$ &
$\delta$, determining $z,\lambda$ & $L=[\,\beta\ \ \Gamma\,]$\\[5pt]
\ref{sec:optimal-facial-repair} Dual-bound optimization &
$\widetilde\lambda,\widetilde N$ &
$z,\mu$, with $\lambda=\bar\lambda+\mu\one$ & $L=[\,\beta\ \ \Gamma\,]$\\[5pt]
\ref{subsec:root-oriented-reformulation} Continuous-bound optimization &
$\widetilde\lambda,\widetilde N$ &
$\mu$, with $\lambda=\bar\lambda+\mu\one$ & $\Gamma$ only\\
\bottomrule
\end{tabularx}
\end{table}

In all three constructions, $N$ and $\bar\lambda$ remain fixed.
The first two recover the full equality multiplier
$L=[\,\beta\ \ \Gamma\,]$ to obtain a feasible dual certificate;
the third constructs only $\Gamma$ to ensure convexity.
We use this direct continuous-bound construction in the computational
experiments that follow.

\section{Computational experiments}
\label{sec:computational-design}

\subsection{The two-phase workflow}
\label{sec:pipeline}

We implement MIQCR--FR using ADMM to solve the facially reduced
relaxation. The two-phase workflow is as follows. Phase~1 uses
continuous-bound optimization to construct an exact convex reformulation
from the approximate ADMM multipliers. Phase~2 solves that reformulation.
The bound guarantee in \cref{thm:root-dominance} motivates this
Phase~1 choice: from the same multiplier estimates and using the same
product variables and McCormick constraints, continuous-bound optimization
yields a continuous
relaxation at least as strong as either dual-recovery construction.
Preliminary computational tests also favored it over these alternatives
in overall solution performance.

\paragraph{Phase 1: Convex reformulation.}
Phase~1 includes the relaxation solve, multiplier extraction, and model
construction.
For the equality-constrained BQP~\eqref{eq:bqp}, these steps are:
\begin{enumerate}[leftmargin=*]
 \item \emph{Solve the reduced relaxation.}
 Remove redundant rows of $A$, compute an orthonormal basis $V$ of
 $\nullspace([-b\ A])$, and apply ADMM to the split formulation
 \eqref{eq:admm-split}.
 \item \emph{Extract the multiplier estimates.}
 Obtain $(\widetilde\lambda,\widetilde N)$ from the finite ADMM output using
 \eqref{eq:multiplier-estimates}. Set $N=\widetilde N\ge0$ and
 $\bar\lambda=\widetilde\lambda-(\one^T\widetilde\lambda/n)\one$.
 \item \emph{Build the convex MIQP.}
 With $\bar\lambda$ and $N$ fixed, choose $\mu_{\star}$ and
 $\Gamma_{\star}$ as in \cref{subsec:root-oriented-reformulation}.
 Form the quadratic objective in~\eqref{eq:root-underestimator} and restore
 the product terms using the MIQCR construction in \cref{sec:dnn-miqcr}.
\end{enumerate}

\paragraph{Phase 2: Integer optimization.}
Solve the resulting convex MIQP with a mixed-integer solver, using its
continuous relaxation for bounding. The feasible objectives and valid
lower bounds refer to the original BQP because the reformulation is exact
at every feasible binary point. Its final optimality gap, together with
the total time for both phases, measures the performance of the complete
method.

\subsection{ADMM implementation and multiplier extraction}\label{subsec:admm-implementation-details}

The ADMM solver used in our implementation provides the multiplier
estimates required by the constructions in \cref{sec:dual-recovery}.

\paragraph{Split formulation and dual.}

To apply two-block ADMM with separate positive-semidefinite and polyhedral
projections, write~\eqref{eq:reduced-dnn} as
\begin{equation}\label{eq:admm-split}
  \min_{R,Y}\ \ip{C}{Y}
  \quad\text{s.t.}\quad
  Y=VRV^T,\quad R\succeq0,\quad \arrow(Y)=e_0,\quad Y\ge0.
\end{equation}
With multiplier $Z$ for $Y-VRV^T=0$, a direct
Lagrange calculation gives the split dual
\begin{equation}\label{eq:split-dual}
  \begin{aligned}
  \max_{z,\lambda,N,Z}\quad &z\\
  \text{s.t.}\quad
     &S(z,\lambda,N)+Z=0,\\
     &V^TZV\preceq0,\\
      &N\ge0 \quad\text{entrywise}.
  \end{aligned}
\end{equation}
Together with $V^TZV\preceq0$, the stationarity equation gives
\begin{equation}\label{eq:split-reduced-slack}
  -V^TZV
  =V^TS(z,\lambda,N)V\succeq0.
\end{equation}
Eliminating $Z$ recovers~\eqref{eq:reduced-dual}.  Thus the split and reduced
duals are algebraically equivalent; no ADMM convergence argument is involved.

\paragraph{ADMM iterations.}

We apply standard two-block ADMM to~\eqref{eq:admm-split}. Define
$\cP:=\{Y\in\Smat^{n+1}:\arrow(Y)=e_0,\ Y\ge0\}$. For penalty
parameter $\sigma>0$, relaxation parameter $\omega$, and unscaled symmetric
multiplier $Z\in\Smat^{n+1}$, the iterations are
\begin{subequations}\label{eq:admm-updates}
\begin{align}
  R^{k+1}
  &=\Pi_{\Splus}
    \left(V^T\left(Y^k+\frac{Z^k}{\sigma}\right)V\right),\label{eq:R-update}\\
  Y^{k+1}
  &=\Pi_{\cP}
    \left(VR^{k+1}V^T-\frac{C+Z^k}{\sigma}\right),\label{eq:Y-update}\\
  Z^{k+1}
  &=Z^k+\omega\sigma\left(Y^{k+1}-VR^{k+1}V^T\right).
  \label{eq:Z-update}
\end{align}
\end{subequations}
For $\bar Y\in\Smat^{n+1}$, the projection
$\widehat Y:=\Pi_{\cP}(\bar Y)$ is
\begin{equation}\label{eq:dnn-polyhedral-projection}
\begin{aligned}
  \widehat Y_{00}&=1,\\
  \widehat Y_{0i}=\widehat Y_{i0}=\widehat Y_{ii}
    &=\max\left\{0,\frac{2\bar Y_{0i}+\bar Y_{ii}}{3}\right\},
      &&i=1,\ldots,n,\\
  \widehat Y_{ij}=\widehat Y_{ji}
    &=\max\{0,\bar Y_{ij}\},
      &&1\le i<j\le n.
\end{aligned}
\end{equation}
Thus each iteration uses one reduced positive-semidefinite projection and one
explicit polyhedral projection. For QAP, we adapt the implementation to the strengthened relaxation
of Oliveira, Wolkowicz, and Xu~\cite{oliveira2018admm}.

\paragraph{Multiplier extraction.}

Given the final ADMM coupling multiplier $\widetilde Z$, we extract estimates
$(\widetilde z,\widetilde\lambda,\widetilde N)$ from an approximate form
of the stationarity equation $S(z,\lambda,N)+Z=0$
in~\eqref{eq:split-dual}, namely
\[
 C+\widetilde Z\approx
 \arrow^*(\widetilde z,\widetilde\lambda)
 +\begin{pmatrix}0&0\\0&\widetilde N\end{pmatrix},
 \qquad \widetilde N\ge0.
\]
We extract the multiplier estimates by minimizing the Frobenius norm of
the stationarity residual:
\begin{equation}\label{eq:multiplier-estimates}
 (\widetilde z,\widetilde\lambda,\widetilde N)
 \in\operatorname*{arg\,min}_{\substack{z\in\R,\ \lambda\in\R^n\\
                                    N\in\Smat^n,\ N\ge0}}
 \left\|C+\widetilde Z-\arrow^*(z,\lambda)
 -\begin{pmatrix}0&0\\0&N\end{pmatrix}\right\|_F^2.
\end{equation}
Problem~\eqref{eq:multiplier-estimates} has a closed-form solution.
Write $T=C+\widetilde Z$. The normalization and off-diagonal entries give
\[
 \widetilde z=T_{00},\qquad
 \widetilde N_{ij}=\max\{T_{ij},0\}
 \quad(1\le i\ne j\le n).
\]
For each $i=1,\ldots,n$, the remaining terms reduce to
\[
 \min_{\substack{\lambda_i\in\R\\N_{ii}\ge0}}
 2\left(T_{0i}-\frac{\lambda_i}{2}\right)^2
 +\left(T_{ii}+\lambda_i-N_{ii}\right)^2,
\]
where $\lambda_i$ is unrestricted and the factor $2$ accounts for the two
symmetric off-diagonal entries. Minimizing this expression gives
\[
 \widetilde\lambda_i=
 \begin{cases}
  2T_{0i},&T_{ii}+2T_{0i}\ge0,\\[2pt]
  \dfrac23(T_{0i}-T_{ii}),&T_{ii}+2T_{0i}<0,
 \end{cases}
 \qquad
 \widetilde N_{ii}=\max\{T_{ii}+2T_{0i},0\}.
\]
Thus multiplier extraction requires $O(n^2)$ arithmetic operations and no
iterative optimization solver.
This enforces $\widetilde N\ge0$ but does not guarantee
$V^TS(\widetilde z,\widetilde\lambda,\widetilde N)V\succeq0$.
The estimates can be used in any of the three constructions in
\cref{sec:dual-recovery}.

\subsection{Instances and experimental protocol}
\label{subsec:experimental-protocol}

In the results below, MIQCR--FR denotes the ADMM implementation described
above. We compare MIQCR--FR, MIQCR-CB, and direct Gurobi on 431 instances
from six problem families, using the same inputs and initial feasible
solutions for all three methods. Their relaxations and reformulations differ.

The benchmark includes 61 symmetric-input QAPLIB instances of order at
most 30 \cite{burkard1997qaplib}; 102 quadratic cycle-cover instances with
at most 1,000 arcs from de Meijer and Sotirov's collection
\cite{demeijer2021qccp}; all 30 QSAP1 semi-assignment instances distributed
by The Optimization Firm \cite{nohra2021spectral}; and 90 $k$-cluster
instances from Lambert's repository \cite{lambertKclusterData}, covering
$n=80,120,160$ and $k=n/4,n/2$ with all available densities and replicates.
We also construct 72 prescribed-size partitioning instances from 36 graphs
in the DIMACS/Stanford GraphBase collection hosted by Trick
\cite{trickColorData}, using three or four nearly equal parts per graph.
These are derived instances, rather than an existing quadratic partitioning
benchmark. The remaining 76 instances are quadratic shortest-path (QSPP)
problems from Hu and Sotirov \cite{hu2020qspp}: 40 GRID1 square, eight
GRID3 square, and 28 PARK cases, with 60--990 binary arc variables.
The instance manifest fixes the selection and ordering; Tables S1--S6 of
Online Resource~1 report every instance.

Each run uses one AMD EPYC 9654 core and 64 GiB of memory, MATLAB R2025b,
and Gurobi 13.0.2. MATLAB, its underlying linear algebra routines, and
Gurobi each use one thread. Each method receives a total budget of
3,600 seconds. For MIQCR--FR and MIQCR-CB, the Phase~1 solver is capped
at 900 seconds, and Phase~2 receives the time remaining after
reformulation. Direct Gurobi receives the full budget. We report the
complete reformulation time (P1), subsequent integer optimization time
(P2), and their sum (Total).
Reported times may slightly exceed the nominal budget because of
noninterruptible numerical work and solver termination.

ADMM uses residual tolerance $10^{-3}$, at most 10,000 iterations, and at
most eight penalty changes. For MIQCR--FR, we apply a numerical
convexity safeguard to the quadratic coefficient matrix $Q_c$ after
equality completion: we add $\varepsilon I$ to $Q_c$ and subtract
$\varepsilon\one$ from the linear coefficient, where
$\varepsilon=\max\{0,10^{-8}\max\{1,\|Q_c\|_\infty\}-\lambda_{\min}(Q_c)\}$.
This preserves the objective at binary points and weakens the continuous
bound by at most $n\varepsilon/4$.

MIQCR-CB uses the SMIQP implementation, modified to save completed
reformulations during Phase~1; the ConicBundle and CSDP libraries are
unchanged. At the Phase~1 time limit, Phase~2 uses the latest saved
reformulation. If none is available, the run retains only its initial
feasible solution and has no finite lower bound.
If an SDP oracle evaluation fails, the implementation uses
the last completed finite evaluation, when available, to construct the
reformulation.

Every retained Phase~2 product has full McCormick links. Gurobi uses
seed zero and relative and absolute gap tolerances of $10^{-6}$.
The convex reformulations disable quadratic prelinearization, while direct
Gurobi uses nonconvex mode with automatic prelinearization. Other Gurobi
parameters use their defaults.

We assess optimality certification and feasible-solution quality using
the best verified feasible objective $U$ and final lower bound $L$, both
in the internal minimization convention. The percentage gap is
$100\max\{0,U-L\}/\max\{1,|U|\}$. An instance passes the optimality test
if $U-L<1-10^{-4}$ for integer-valued objectives, or
$U-L<\max\{10^{-6},10^{-6}|U|\}$ for semi-assignment instances.
The ``Certified'' counts report how many instances pass this test.

Incumbents are verified in the original formulation. Final solver bounds,
including CB coefficient-omission corrections, are checked against verified
feasible values from this study and earlier runs; earlier values are used
only for validation. One CB run fails this check. Its bound, gap, and
certification are excluded, while its measured time and verified feasible
objective are retained. Online Resource~1 identifies the case.

\paragraph{AI assistance.}
AI tools assisted with preparing auxiliary scripts for generating tables.
The authors implemented the optimization algorithms and reviewed the
AI-assisted scripts and resulting tables.

\subsection{Certification and computational cost}

\Cref{tab:final-summary} summarizes the complete benchmark. MIQCR--FR
certifies 296 of 431 instances, compared with 156 for MIQCR-CB and 187 for
direct Gurobi. Its mean total time is 1,455.05 seconds, compared with
2,203.08 seconds for direct Gurobi, a reduction of 34.0\% across all
instances.
\begin{table}[ht]\centering\small\setlength{\tabcolsep}{3pt}
\caption{Results for all 431 instances. Times are in seconds; reported times and gaps are arithmetic means.}\label{tab:final-summary}
\begin{tabular}{llrrrrr}\toprule
Family & Method & Certified & P1 & P2 & Total & Gap (\%) \\
\midrule
QAP & MIQCR--FR & 42/61 & 35.90 & 1,619.84 & 1,655.74 & 2.15 \\
 & CB & 22/61 & 724.56 & 1,961.18 & 2,685.74 & 82.34 \\
 & Gurobi & 25/61 & 0.00 & 2,195.63 & 2,195.63 & 38.22 \\
\addlinespace
Cycle cover & MIQCR--FR & 84/102 & 62.00 & 904.51 & 966.51 & 6.04 \\
 & CB & 52/102 & 651.80 & 1,505.76 & 2,098.51 & 1,406.11 \\
 & Gurobi & 91/102 & 0.00 & 728.98 & 728.98 & 7.24 \\
\addlinespace
Semi-assignment & MIQCR--FR & 15/30 & 1.55 & 1,838.66 & 1,840.21 & 4.53 \\
 & CB & 15/30 & 645.08 & 1,387.09 & 2,032.17 & 4.93 \\
 & Gurobi & 6/30 & 0.00 & 2,902.69 & 2,902.69 & 37.42 \\
\addlinespace
$k$-cluster & MIQCR--FR & 57/90 & 3.21 & 1,710.08 & 1,713.29 & 0.32 \\
 & CB & 31/90 & 34.85 & 2,496.87 & 2,531.72 & 0.90 \\
 & Gurobi & 0/90 & 0.00 & 3,606.89 & 3,606.89 & 83.63 \\
\addlinespace
Partitioning & MIQCR--FR & 25/72 & 366.45 & 2,346.50 & 2,712.94 & 3.72 \\
 & CB & 6/72 & 775.78 & 2,573.18 & 3,348.96 & 2,593,404.03 \\
 & Gurobi & 17/72 & 0.00 & 2,798.95 & 2,798.95 & 38.07 \\
\addlinespace
QSPP & MIQCR--FR & 73/76 & 56.97 & 243.13 & 300.09 & 0.82 \\
 & CB & 30/76 & 725.76 & 1,689.02 & 2,259.21 & 129.89 \\
 & Gurobi & 48/76 & 0.00 & 1,684.37 & 1,684.37 & 34.05 \\
\addlinespace
\bottomrule\end{tabular}
\par\smallskip
\begin{minipage}{\linewidth}\footnotesize\raggedright
P2 and gap means include all instances except for CB: four cycle-cover and
seven QSPP instances have no Phase~2 model or finite gap; one QAP gap is
excluded by validation. Total-time means include all runs.
\end{minipage}
\end{table}
\FloatBarrier

The largest certification differences occur on $k$-cluster, QSPP, and QAP.
On $k$-cluster, MIQCR--FR certifies 57 of 90 instances, whereas direct
Gurobi certifies none; the mean final gaps are 0.32\% and 83.63\%,
respectively. On QSPP, MIQCR--FR certifies 73 of 76 instances, compared
with 48 for direct Gurobi, and reduces mean total time from 1,684.37 to
300.09 seconds. QAP certifications increase from 25 to 42.

The remaining families give a more mixed picture. Semi-assignment
certifications increase from 6 to 15, but partitioning improves only from
17 to 25, with similar all-instance mean times. Direct Gurobi performs
better on the tested sparse cycle-cover instances: it certifies 91 instances against 84 for
MIQCR--FR, with mean total times of 728.98 and 966.51 seconds,
respectively. These cycle-cover instances have sparse quadratic
objectives. For such problems, convex reformulation can introduce a dense
quadratic objective and additional product variables, reducing the
advantage of stronger relaxation bounds.

More certifications need not imply faster solution: on the 13 partitioning cases certified
by both MIQCR--FR and direct Gurobi, the median ratio of their total
times is 30.25, in favor of direct Gurobi.

Phase~1 is relatively inexpensive for MIQCR--FR on most families: its
mean cost ranges from 1.55 seconds for semi-assignment to 62.00 seconds
for cycle cover. Partitioning is the exception, at 366.45 seconds.
MIQCR--FR constructs a Phase~2 model for every instance, including the
28 cases in which ADMM reaches its time or iteration limit.

MIQCR-CB reaches Phase~2 on 420 of 431 instances, including 221 runs
that use a saved reformulation after the Phase~1 time limit. Its mean total time is 2,504.69 seconds. Both methods certify 15
semi-assignment cases. On $k$-cluster, CB has a relatively short Phase~1
but certifies 31 cases against 57 for MIQCR--FR. Thus the differences
are not explained solely by the cost of the semidefinite phase.

Very weak CB bounds produce large mean gaps, particularly
on partitioning, so these averages should be read alongside certification
counts and feasible objectives.

\subsection{Quality of feasible solutions}

Certification and gap alone do not distinguish a better feasible solution
from a stronger lower bound. For each instance,
we compare the verified feasible objective values returned by the three
methods. \Cref{tab:final-incumbents} counts how often each method attains
the smallest value, allowing ties within a numerical tolerance. This best
observed value need not be optimal.
\begin{table}[ht]\centering\small\caption{Matches to the best observed feasible objective, with ties allowed.}\label{tab:final-incumbents}\begin{tabular}{lrrr}\toprule
Family & MIQCR--FR & CB & Gurobi \\
\midrule
QAP & 53/61 & 34/61 & 38/61 \\
Cycle cover & 88/102 & 61/102 & 100/102 \\
Semi-assignment & 21/30 & 22/30 & 26/30 \\
$k$-cluster & 90/90 & 80/90 & 37/90 \\
Partitioning & 67/72 & 21/72 & 44/72 \\
QSPP & 74/76 & 43/76 & 70/76 \\
\bottomrule\end{tabular}\end{table}
\FloatBarrier

MIQCR--FR matches the best observed solution on all 90 $k$-cluster
instances, compared with 37 for direct Gurobi. It also obtains more matches on QAP, partitioning, and QSPP, so its gains
include feasible-solution quality as well as certification. Direct Gurobi finds more matches on cycle cover and semi-assignment,
despite fewer certifications on the latter. This illustrates why bounds
and incumbents must be evaluated separately. Runs with no
validated improvement retain the common initial feasible solution; these
values are identified in Online Resource~1.

These results assess the complete MIQCR--FR method; they do not isolate
the contributions of facial reduction, relaxation strength, and
continuous-bound optimization.

\section{Conclusion}

We established MIQCR--FR, a framework for exact convex reformulation from facially
reduced semidefinite relaxations of equality-constrained binary quadratic
problems. A key structural property connects this framework to standard
MIQCR: retaining the original linear equalities and their products with
each binary variable permits recovery of the original dual multipliers
from every feasible point of the reduced dual, without changing its bound.
Thus facial reduction can reduce the semidefinite matrix order while
preserving access to the standard MIQCR construction.

For approximate multiplier estimates, we established three complementary
constructions. A spectral shift restores dual feasibility with an explicit
change in the dual value. Optimizing that value within a restricted
multiplier family yields an efficiently solvable trust-region subproblem.
Full dual feasibility is sufficient but not necessary for convex
reformulation. By enforcing convexity directly, continuous-bound
optimization attains the strongest continuous MIQCR bound within the
chosen multiplier family through a single reduced eigenvalue calculation.
This is the construction used in our ADMM implementation of MIQCR--FR.
Together, these results
provide both dual certificates and direct convex reformulations from
approximate multiplier estimates.

On 431 instances from six problem families, MIQCR--FR certifies 296
optima, compared with 156 for MIQCR-CB and 187 for direct Gurobi, under
the common time budget and numerical validation criteria in
\cref{sec:computational-design}. Its mean total time is 34.0\% lower than
that of direct Gurobi, with the largest certification gains on quadratic
assignment, shortest path, and $k$-cluster.
These results demonstrate the practical value of combining facially
reduced semidefinite optimization with exact convex reformulation.

\section*{Statements and Declarations}
\subsection*{Funding}
Research supported by the Air Force Office of Scientific Research under
award number FA9550-23-1-0508.

\subsection*{Competing interests}
The authors have no relevant financial or non-financial interests to disclose.

\subsection*{Data and code availability}
Instance-level results for the study in \cref{sec:computational-design}
are provided in Online Resource~1. The implementation, instance manifest,
data provenance, solver settings, and machine-readable results are
publicly available at \url{https://github.com/haohu-code/MIQCR-FR}.

\subsection*{Supplementary information}
Online Resource~1 contains the instance-level numerical results for all
431 instances and 1,293 runs, including timings, feasible objectives,
lower bounds, gaps, and solution statuses (Tables S1--S6).

\end{document}


\maketitle
\begin{center}
Corresponding author: Hao Hu\\
School of Mathematical and Statistical Sciences\\
Clemson University, Clemson, SC, USA\\
\href{mailto:hhu2@clemson.edu}{\texttt{hhu2@clemson.edu}}
\end{center}

This supplement reports the complete instance-level results for the
three-method comparison in the accompanying paper. It includes all
431 instances and 1,293 runs, including limits and numerical failures.
The paper contains the experimental protocol, aggregate comparisons,
and their interpretation.

The experiments follow the protocol in Section~4 of the main paper.
At the CB Phase~1 time limit, Phase~2 uses the latest saved reformulation;
11 runs have none.

For the best verified feasible objective $U$ and final lower bound $L$, the reported percentage gap is
$100\max\{0,U-L\}/\max\{1,|U|\}$ when both bounds are valid and finite.

The optimality test and numerical convexity safeguard are specified in
Section~4.3 of the main paper.
Incumbents are verified in the original formulation.
Final bounds are checked against the best verified incumbent across methods
and previously verified feasible values used only for validation.
One CB run, \texttt{nug25}, fails bound validation: its returned lower bound
exceeds a verified feasible objective. Its bound, gap, and certification
are excluded; its measured time and verified incumbent are retained.

\clearpage
\singlespacing
\begingroup
\renewcommand{\scriptsize}{\fontsize{7}{8}\selectfont}
\renewcommand{\small}{\fontsize{9}{10.8}\selectfont}
\section{Instance-level computational results}\label{app:instance-results}
Tables~\ref{tab:instances-qap}--\ref{tab:instances-qspp} list all
431 instances and 1,293 runs. MIQCR--FR denotes the ADMM implementation
of our facial-reduction framework, and CB denotes MIQCR-CB. P1 is the complete reformulation time, P2 the subsequent integer
optimization time, and Total their sum. All times are in seconds.
$U$ is the best verified feasible objective (upper bound), and $L$ is the
final lower bound. All objectives use the internal
minimization convention; $k$-cluster values are
negative cluster profits. Bold upper bounds match the best observed value
across methods. A dagger marks an unchanged initial solution when no validated
improvement is available.
Status codes are O (native optimal), T (Phase-2 time limit),
P1 (Phase-1 limit without an exported model), and V (failed numerical
validation). A star indicates that the run passes the optimality test in
Section~4.3 of the main paper.
An infinite gap means no finite bound was obtained; a dash denotes an
absent time or an excluded bound or gap. A displayed zero gap alone does not indicate optimality,
because percentages are rounded for display.

\begingroup\scriptsize\setlength{\tabcolsep}{2pt}
\begin{longtable}{ll l rrrrrr l}
\caption{QAP: all instances; times in seconds.}\label{tab:instances-qap}\\
\toprule
Instance & $n$ & Method & P1 & P2 & Total & $U$ & $L$ & Gap (\%) & Status \\
\midrule
\endfirsthead
\toprule
Instance & $n$ & Method & P1 & P2 & Total & $U$ & $L$ & Gap (\%) & Status \\
\midrule
\endhead
\midrule\multicolumn{10}{r}{Continued on next page}\\\endfoot
\bottomrule\endlastfoot
chr12a & 144 & MIQCR--FR & 0.85 & 0.35 & 1.20 & \textbf{9,552} & 9,552.00 & 0.00 & O$^{*}$ \\*
 &  & CB & 50.63 & 15.40 & 66.04 & \textbf{9,552} & 9,552.00 & 0.00 & O$^{*}$ \\*
 &  & Gurobi & 0.00 & 1.94 & 1.95 & \textbf{9,552} & 9,552.00 & 0.00 & O$^{*}$ \\
\addlinespace[2pt]
chr12b & 144 & MIQCR--FR & 0.83 & 0.43 & 1.26 & \textbf{9,742} & 9,742.00 & 0.00 & O$^{*}$ \\*
 &  & CB & 460.10 & 0.88 & 460.98 & \textbf{9,742} & 9,741.99 & 0.00 & O$^{*}$ \\*
 &  & Gurobi & 0.00 & 0.65 & 0.65 & \textbf{9,742} & 9,742.00 & 0.00 & O$^{*}$ \\
\addlinespace[2pt]
chr12c & 144 & MIQCR--FR & 1.17 & 0.41 & 1.58 & \textbf{11,156} & 11,156.00 & 0.00 & O$^{*}$ \\*
 &  & CB & 901.35 & 1.24 & 902.59 & \textbf{11,156} & 11,156.00 & 0.00 & O$^{*}$ \\
\end{longtable}\endgroup
\clearpage
\begingroup\scriptsize\setlength{\tabcolsep}{2pt}
\begin{longtable}{ll l rrrrrr l}
\caption{Cycle cover: all instances; times in seconds.}\label{tab:instances-cycle_cover}\\
\toprule
Instance & $n$ & Method & P1 & P2 & Total & $U$ & $L$ & Gap (\%) & Status \\
\midrule
\endfirsthead
\toprule
Instance & $n$ & Method & P1 & P2 & Total & $U$ & $L$ & Gap (\%) & Status \\
\midrule
\endhead
\midrule\multicolumn{10}{r}{Continued on next page}\\\endfoot
\bottomrule\endlastfoot
qcc\_ER\_1 & 119 & MIQCR--FR & 3.09 & 0.63 & 3.72 & \textbf{319} & 319.00 & 0.00 & O$^{*}$ \\*
 &  & CB & 89.45 & 0.77 & 90.22 & \textbf{319} & 319.00 & 0.00 & O$^{*}$ \\*
 &  & Gurobi & 0.00 & 0.47 & 0.47 & \textbf{319} & 319.00 & 0.00 & O$^{*}$ \\
\addlinespace[2pt]
qcc\_ER\_11 & 195 & MIQCR--FR & 1.75 & 27.85 & 29.60 & \textbf{236} & 236.00 & 0.00 & O$^{*}$ \\*
 &  & CB & 900.62 & 38.67 & 939.29 & \textbf{236} & 236.00 & 0.00 & O$^{*}$ \\*
 &  & Gurobi & 0.00 & 13.12 & 13.12 & \textbf{236} & 236.00 & 0.00 & O$^{*}$ \\
\addlinespace[2pt]
qcc\_ER\_12 & 327 & MIQCR--FR & 8.26 & 3,591.96 & 3,600.23 & 206 & 177.51 & 13.83 & T \\*
 &  & CB & 900.91 & 2,699.57 & 3,600.48 & 260 & -43.67 & 116.80 & T \\
\end{longtable}\endgroup
\clearpage
\begingroup\scriptsize\setlength{\tabcolsep}{2pt}
\begin{longtable}{ll l rrrrrr l}
\caption{Semi-assignment: all instances; times in seconds.}\label{tab:instances-semi_assignment}\\
\toprule
Instance & $n$ & Method & P1 & P2 & Total & $U$ & $L$ & Gap (\%) & Status \\
\midrule
\endfirsthead
\toprule
Instance & $n$ & Method & P1 & P2 & Total & $U$ & $L$ & Gap (\%) & Status \\
\midrule
\endhead
\midrule\multicolumn{10}{r}{Continued on next page}\\\endfoot
\bottomrule\endlastfoot
qsap\_10\_3 & 240 & MIQCR--FR & 2.51 & 3,597.92 & 3,600.43 & \textbf{-6,027.7354} & -6,870.30 & 13.98 & T \\*
 &  & CB & 900.71 & 2,699.53 & 3,600.24 & -6,012.1274 & -6,920.61 & 15.11 & T \\*
 &  & Gurobi & 0.00 & 3,603.18 & 3,603.18 & -5,782.7011 & -10,502.61 & 81.62 & T \\
\addlinespace[2pt]
qsap\_12\_3 & 140 & MIQCR--FR & 1.09 & 89.15 & 90.24 & \textbf{-6,127.6362} & -6,127.64 & 0.00 & O$^{*}$ \\*
 &  & CB & 900.50 & 87.78 & 988.28 & \textbf{-6,127.6362} & -6,127.64 & 0.00 & O$^{*}$ \\*
 &  & Gurobi & 0.00 & 3,606.46 & 3,606.46 & \textbf{-6,127.6362} & -7,849.27 & 28.10 & T \\
\addlinespace[2pt]
qsap\_12\_4 & 175 & MIQCR--FR & 1.71 & 3,598.48 & 3,600.19 & \textbf{-6,606.6188} & -6,713.54 & 1.62 & T \\*
 &  & CB & 900.46 & 2,699.72 & 3,600.19 & \textbf{-6,606.6188} & -6,735.22 & 1.95 & T \\
\end{longtable}\endgroup
\clearpage
\begingroup\scriptsize\setlength{\tabcolsep}{2pt}
\begin{longtable}{ll l rrrrrr l}
\caption{$k$-cluster: all instances; times in seconds.}\label{tab:instances-kcluster}\\
\toprule
Instance & $n$ & Method & P1 & P2 & Total & $U$ & $L$ & Gap (\%) & Status \\
\midrule
\endfirsthead
\toprule
Instance & $n$ & Method & P1 & P2 & Total & $U$ & $L$ & Gap (\%) & Status \\
\midrule
\endhead
\midrule\multicolumn{10}{r}{Continued on next page}\\\endfoot
\bottomrule\endlastfoot
kcluster120\_025\_30\_1 & 120 & MIQCR--FR & 0.99 & 724.17 & 725.16 & \textbf{-198} & -198.00 & 0.00 & O$^{*}$ \\*
 &  & CB & 57.94 & 3,542.16 & 3,600.09 & \textbf{-198} & -201.74 & 1.89 & T \\*
 &  & Gurobi & 0.00 & 3,600.37 & 3,600.37 & -186 & -348.00 & 87.10 & T \\
\addlinespace[2pt]
kcluster120\_025\_30\_2 & 120 & MIQCR--FR & 1.13 & 3,017.68 & 3,018.81 & \textbf{-199} & -199.00 & 0.00 & O$^{*}$ \\*
 &  & CB & 57.48 & 3,542.63 & 3,600.11 & -198 & -204.89 & 3.48 & T \\*
 &  & Gurobi & 0.00 & 3,600.32 & 3,600.32 & -194 & -343.00 & 76.80 & T \\
\addlinespace[2pt]
kcluster120\_025\_30\_3 & 120 & MIQCR--FR & 0.99 & 970.00 & 970.99 & \textbf{-197} & -197.00 & 0.00 & O$^{*}$ \\*
 &  & CB & 55.47 & 3,544.63 & 3,600.10 & \textbf{-197} & -201.58 & 2.33 & T \\
\end{longtable}\endgroup
\clearpage
\begingroup\scriptsize\setlength{\tabcolsep}{2pt}
\begin{longtable}{ll l rrrrrr l}
\caption{Partitioning: all instances; times in seconds.}\label{tab:instances-partition}\\
\toprule
Instance & $n$ & Method & P1 & P2 & Total & $U$ & $L$ & Gap (\%) & Status \\
\midrule
\endfirsthead
\toprule
Instance & $n$ & Method & P1 & P2 & Total & $U$ & $L$ & Gap (\%) & Status \\
\midrule
\endhead
\midrule\multicolumn{10}{r}{Continued on next page}\\\endfoot
\bottomrule\endlastfoot
anna\_k3 & 414 & MIQCR--FR & 202.79 & 3,397.42 & 3,600.21 & \textbf{129} & 125.33 & 2.84 & T \\*
 &  & CB & 900.67 & 2,699.48 & 3,600.15 & 165 & 115.21 & 30.17 & T \\*
 &  & Gurobi & 0.00 & 3.93 & 3.93 & \textbf{129} & 129.00 & 0.00 & O$^{*}$ \\
\addlinespace[2pt]
anna\_k4 & 552 & MIQCR--FR & 315.04 & 3,285.23 & 3,600.27 & \textbf{162} & 155.50 & 4.01 & T \\*
 &  & CB & 900.86 & 2,699.27 & 3,600.13 & 359 & -104,894,571.16 & 29,218,643.50 & T \\*
 &  & Gurobi & 0.00 & 63.36 & 63.36 & \textbf{162} & 162.00 & 0.00 & O$^{*}$ \\
\addlinespace[2pt]
david\_k3 & 261 & MIQCR--FR & 39.97 & 477.50 & 517.47 & \textbf{126} & 126.00 & 0.00 & O$^{*}$ \\*
 &  & CB & 900.44 & 2,699.69 & 3,600.13 & 127 & 114.50 & 9.84 & T \\
\end{longtable}\endgroup
\clearpage
\begingroup\scriptsize\setlength{\tabcolsep}{2pt}
\begin{longtable}{ll l rrrrrr l}
\caption{QSPP: all instances; times in seconds.}\label{tab:instances-qspp}\\
\toprule
Instance & $n$ & Method & P1 & P2 & Total & $U$ & $L$ & Gap (\%) & Status \\
\midrule
\endfirsthead
\toprule
Instance & $n$ & Method & P1 & P2 & Total & $U$ & $L$ & Gap (\%) & Status \\
\midrule
\endhead
\midrule\multicolumn{10}{r}{Continued on next page}\\\endfoot
\bottomrule\endlastfoot
GRID1\_square\_01 & 220 & MIQCR--FR & 1.81 & 3.25 & 5.06 & \textbf{206} & 206.00 & 0.00 & O$^{*}$ \\*
 &  & CB & 77.05 & 83.33 & 160.38 & \textbf{206} & 206.00 & 0.00 & O$^{*}$ \\*
 &  & Gurobi & 0.00 & 2.45 & 2.45 & \textbf{206} & 206.00 & 0.00 & O$^{*}$ \\
\addlinespace[2pt]
GRID1\_square\_02 & 220 & MIQCR--FR & 5.91 & 0.93 & 6.83 & \textbf{181} & 181.00 & 0.00 & O$^{*}$ \\*
 &  & CB & 95.68 & 28.70 & 124.38 & \textbf{181} & 181.00 & 0.00 & O$^{*}$ \\*
 &  & Gurobi & 0.00 & 1.68 & 1.68 & \textbf{181} & 181.00 & 0.00 & O$^{*}$ \\
\addlinespace[2pt]
GRID1\_square\_03 & 220 & MIQCR--FR & 1.09 & 0.94 & 2.02 & \textbf{1,375} & 1,375.00 & 0.00 & O$^{*}$ \\*
 &  & CB & 900.58 & 4.56 & 905.14 & \textbf{1,375} & 1,375.00 & 0.00 & O$^{*}$ \\
\end{longtable}\endgroup
\endgroup